\documentclass[a4paper,12pt,leqno]{amsart}
\usepackage{a4wide}
\numberwithin{equation}{section}
\setlength{\topmargin}{0mm}
\setlength{\textwidth}{160mm}
\setlength{\textheight}{240mm}
\setlength{\oddsidemargin}{0mm}
\setlength{\evensidemargin}{0mm}

\usepackage{enumitem}
\usepackage[utf8]{inputenc}
\usepackage{hyperref}
\usepackage{amssymb, amsmath, amsthm}
\usepackage{amsfonts} 
\usepackage{color}
\usepackage{mathrsfs}
\usepackage{graphicx}
\usepackage{comment}
\usepackage{dcolumn}
\usepackage{bm}
\usepackage[hang,small,bf]{caption}
\usepackage[subrefformat=parens]{subcaption}
\usepackage{algorithmic}
\usepackage{algorithm}
\captionsetup{compatibility=false}

\numberwithin{equation}{section}
\newcommand{\e}{\varepsilon}
\renewcommand{\epsilon}{\varepsilon}
\renewcommand{\d}{\mathrm{d}}
\newcommand{\R}{\mathbb R}
\newcommand{\N}{\mathbb N}

\newcommand{\acosh}{\mathrm{arcosh}}
\newcommand{\asinh}{\mathrm{arsinh}}

\renewcommand{\AA}{A} % For the domain $\Omega_2$ (the complement of the material domain)

\newcommand{\vmean}{\mathbf{M}}  % volume mean
\newcommand{\smean}{\mathbf{m}}  % surface mean

\newtheorem{thm}{Theorem}[section]
\newtheorem{lem}[thm]{Lemma}

\theoremstyle{definition}

\newtheorem{rmk}[thm]{Remark}
\newtheorem{ex}[thm]{Example}

\providecommand{\eat}[1]{}

\newcommand{\Hm}[1]{\leavevmode{\marginpar{\tiny%
$\hbox to 0mm{\hspace*{-0.5mm}$\leftarrow$\hss}%
\vcenter{\vrule depth 0.1mm height 0.1mm width \the\marginparwidth}%
\hbox to 0mm{\hss$\rightarrow$\hspace*{-0.5mm}}$\\\relax\raggedright
#1}}}

\title[]{Construction of signed distance functions through\\ an elliptic equation}

\author[T.~Hasebe]{Takahiro Hasebe}
\address[Takahiro Hasebe]{Department of Mathematics, \\
Hokkaido University, \\
Sapporo 060-0810, Japan}
\email{thasebe@math.sci.hokudai.ac.jp}

\author[J.~Masamune]{Jun Masamune}
\address[Jun Masamune]{Graduate School of Informatics, \\Kyoto University, \\
Kyoto 606-8501, Japan}
\email{masamune.jun.3i@kyoto-u.ac.jp}

\author[T.~Oka]{Tomoyuki Oka}
\address[Tomoyuki Oka]{
Department of Intelligent Mechanical Engineering, \\
Fukuoka Institute of Technology,\\
Fukuoka, 811-0295, Japan}
\email{t-oka@fit.ac.jp}

\author[K.~Sakai]{Kota Sakai}
\address[Kota Sakai]{Graduate School of Engineering, \\
The University of Tokyo,\\
Bunkyo-ku 113-8656, Japan}
\email{sakai-kota786@g.ecc.u-tokyo.ac.jp}

\author[T.~Yamada]{Takayuki Yamada}
\address[Takayuki Yamada]{Graduate School of Engineering, \\
The University of Tokyo,\\
Bunkyo-ku 113-8656, Japan}
\email{t.yamada@mech.t.u-tokyo.ac.jp}

\date{\today}

\subjclass[2020]{Primary: 35J25 Secondary: 35B40, 49Q10}
\keywords{signed distance function, elliptic equation, two-phase domain, rate of convergence, quantitative analysis}

\thanks{
J.M.~is partially supported by LUPICIA CO., LTD and JSPS KAKENHI Grant Number JP23H03798.
T.O.~is partially supported by JSPS KAKENHI Grant Number JP22K20331 and 23K12997 and a project JPNP20004 subsidized by the New Energy and Industrial Technology Development Organization (NEDO). 
K.S. is partially supported by JST SPRING Grant Number JPMJSP2108.
T.H., J.M., T.O. and T.Y. are partially supported by JSPS KAKENHI Grant Number JP23H03800. The authors are grateful to Hirotoshi Kuroda for discussions in the early stage of this work and to the anonymous referees for constructive helpful suggestions.}

\begin{document}

\begin{comment}%for els art
\begin{keyword} topology optimization, level set method, 
\MSC[2010] {\emph{Primary}: ???; \emph{Secondary}: ???, ??? }
\end{keyword}
\end{comment}

\begin{abstract}
Motivated by recent advances in structural optimization, we propose a novel method for constructing the distance function to the boundary of a given domain. Building on and extending the celebrated Varadhan asymptotic theory, our approach reformulates the governing equation into a more appropriate framework. A central contribution of this work is the derivation of convergence rates within this new setting, which are shown to be optimal in one dimension and offer significant improvements over existing results in higher dimensions. 
\end{abstract}

\maketitle

\section{Introduction}

The main aim of this paper is to propose a new method for constructing the \emph{distance function} and to rigorously establish its convergence behavior---most notably, a \emph{sharp convergence rate}, which is \emph{provably optimal} in the one-dimensional case and improves upon known bounds in higher dimensions. This rate, of the form $\sqrt{a} \log(1/a)$, contrasts with weaker results such as $a^{1/4}$ obtained in earlier works (e.g.,~\cite{Tra11}) and appears to be new even in classical settings such as Varadhan's original formulation.

From an engineering perspective, the significance of this study lies in its capability to perform sensitivity analysis based on the adjoint variable method for geometric features represented by the distance function, such as curvature and normal vectors \cite{HMTY,NYa,NYb}. 
This is particularly useful in the fields of topology optimization~\cite{ABBG23,OSY25,TY23} and structural analysis in measurement, because in topology optimization, shapes are represented by characteristic functions, and in structural analysis using X-rays, structures are represented by voxel data~\cite{SOS10}.

Using partial differential equations (PDEs) provides a powerful and flexible method for constructing distance functions. Among various approaches, we revisit and extend a classical elliptic formulation originally studied by Varadhan~\cite{V67}. To illustrate this idea, consider the equation:
\begin{equation}\label{eq:Varadhan}
   \begin{cases}
-a\Delta q_a+q_a = 0 &\text{ in } \Omega,\\
q_a=1&\text{ on } \partial \Omega,
\end{cases}
\end{equation}
where $\Omega$ is a domain in $\R^N$ with suitable regularity and $a>0$ is a small parameter. In the simple one-dimensional case $\Omega=(-h,h)$, where $h>0$, the solution is explicitly given by
\begin{equation}\label{eq:Varadhan2}
q_a(x)=\frac{\cosh(x/\sqrt{a})}{\cosh(h/\sqrt{a})}.
\end{equation}
A direct computation shows that
\[
-\sqrt{a}\log q_a(x) \to d(x,\partial\Omega), \qquad \text{as } a\to 0^+, \quad x\in \Omega,
\]
where $d(x,\partial\Omega):=\inf_{y\in \partial\Omega}|x-y|$ denotes the Euclidean distance to the boundary. In this example, we have $d(x,\partial \Omega) = h - |x|$. See Appendix~\ref{app:rate} for further details, including the case of higher-dimensional ball domains. Remarkably, this convergence is rigorously justified by Varadhan~\cite{V67} for general $N \geq 1$ and even with variable coefficients, where the limiting distance reflects the underlying Riemannian structure (see also~\cite{EI,B,P}).

It is worth noting that the distance function can also be obtained from other PDE-based formulations, such as the Hamilton--Jacobi and heat equations~\cite{O88,AJT02,AJT04,BM19,V67}. However, the present approach, based on elliptic equations with added source terms, offers novel advantages in terms of rate of convergence and practical applicability.

\eat{
The main aim of this paper is to propose a new
method for constructing the \emph{distance function} 
and to establish the convergence rate,
which is particularly optimum 
in the one-dimensional case.
%based on the solution of elliptic equations in the Eulerian coordinate system for shape representation. 
From an engineering perspective, the significance of this study lies in its capability to perform sensitivity analysis based on the adjoint variable method for geometric features represented by the distance function, such as curvature and normal vectors. This is particularly useful in the fields of topology optimization \cite{ABBG23,TY23} and structural analysis in measurement because, in topology optimization, shapes are represented by characteristic functions, and in structural analysis using X-rays, structures are represented by voxel data \cite{SOS10}.
%\Hm{Sharp convergent rate?}

Using Partial Differential Equations (PDEs) provides a useful method for constructing the distance function.  
As an example, let us consider the following equation\/{\rm :}
\begin{equation}\label{eq:Varadhan}
   \begin{cases}
-a\Delta q_a+q_a = 0 &\text{ in } \Omega,\\
q_a=1&\text{ on } \partial \Omega,
\end{cases}
\end{equation}
where $\Omega$ is a domain of $\R^N$ with suitable regularity and $a>0$ is constant. 
Let us briefly describe the case $\Omega=(-h,h)$ in one dimension, where $h>0$. 
The solution to the above equation is explicitly written as 
\begin{equation}\label{eq:Varadhan2}
q_a(x)=\frac{\cosh(x/\sqrt{a})}{\cosh(h/\sqrt{a})}.
\end{equation}
It is easy to check that  
 $$
-\sqrt{a}\log q_a(x)\to  d(x,\partial\Omega),\quad \text{ $x\in \Omega$,\quad as $a\to 0^+$}, 
 $$
 where $d(x,\partial\Omega):=\inf_{y\in \partial\Omega}|x-y|$ is the distance function with respect to $\partial \Omega$. 
 In the above example, it is simply given by $d(x,\partial \Omega) = h-|x|$. See Appendix \ref{app:rate} for further details, including the case of higher dimensional ball domains. 
Surprisingly, the above convergence is rigorously justified by Varadhan \cite{V67} in general dimensions $N\ge 1$ and even with variable coefficients, in which the corresponding Riemann metric appears in the limit (see also \cite{EI,B,P}).         
Note that the distance function can also be extracted from other PDEs, e.g.~the Hamilton--Jacobi equation and the heat equation, see \cite{O88,AJT02,AJT04,BM19,V67}.  %\Hm{Put a reference?}
}

\subsection{Problem setup}

We propose a new PDE method for constructing the distance function based on the following linear elliptic equation with inhomogeneous Dirichlet boundary condition\/{\rm :}  
\begin{equation}\label{eq:sd}
\begin{cases}
-a\Delta u_a+u_a = f &\text{ in } \Omega,\\
u_a=g&\text{ on } \partial \Omega,
\end{cases}
\end{equation}
where $a>0$ is a constant parameter. In case $f=0$ and $g=1$, this equation falls down to \eqref{eq:Varadhan}. 
Though we also use a linear elliptic equation, our view is rather different from the classical case; namely, we recognize $\Omega$ as a fixed design domain, and the ``domain of interest'' is defined as $[f=0]:=\{x \in \Omega: f(x)=0\}$ with the source term $f$. 
As a typical example of $f$, one can take the characteristic function with respect to $\omega\subset\Omega$ (i.e.,~$f=\chi_\omega$), which implies that we focus on the distance to the boundary of (the material) $\omega$ varying within the fixed domain $\Omega$, as in the structural optimization process \cite{A02}.  
%We will explain the advantage of this approach in Subsection \ref{R:two-material}.
The following example in one dimension will be helpful for grasping an asymptotic feature as $a\to0^+$.

\begin{figure}[b]
        \centering
   \begin{minipage}{0.4\hsize}  
   \begin{center}
    \includegraphics[keepaspectratio, scale=0.35]{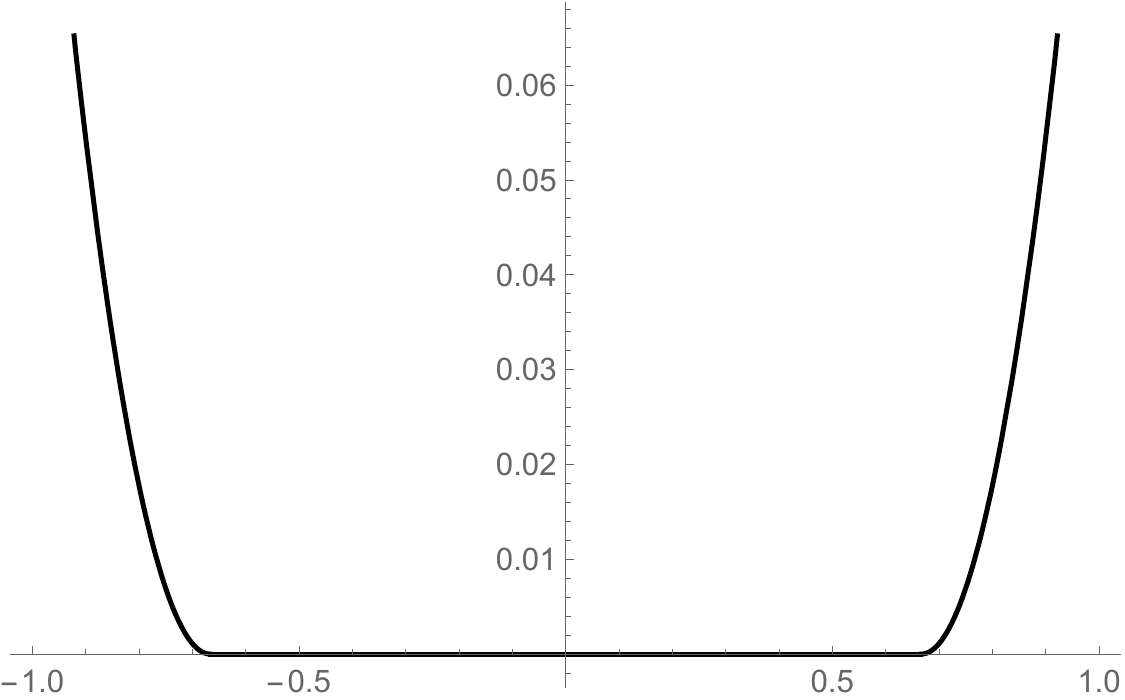}
\end{center}
\end{minipage} 
\hspace{15mm}
  \begin{minipage}{0.4\hsize} 
   \begin{center}
    \includegraphics[keepaspectratio, scale=0.35]{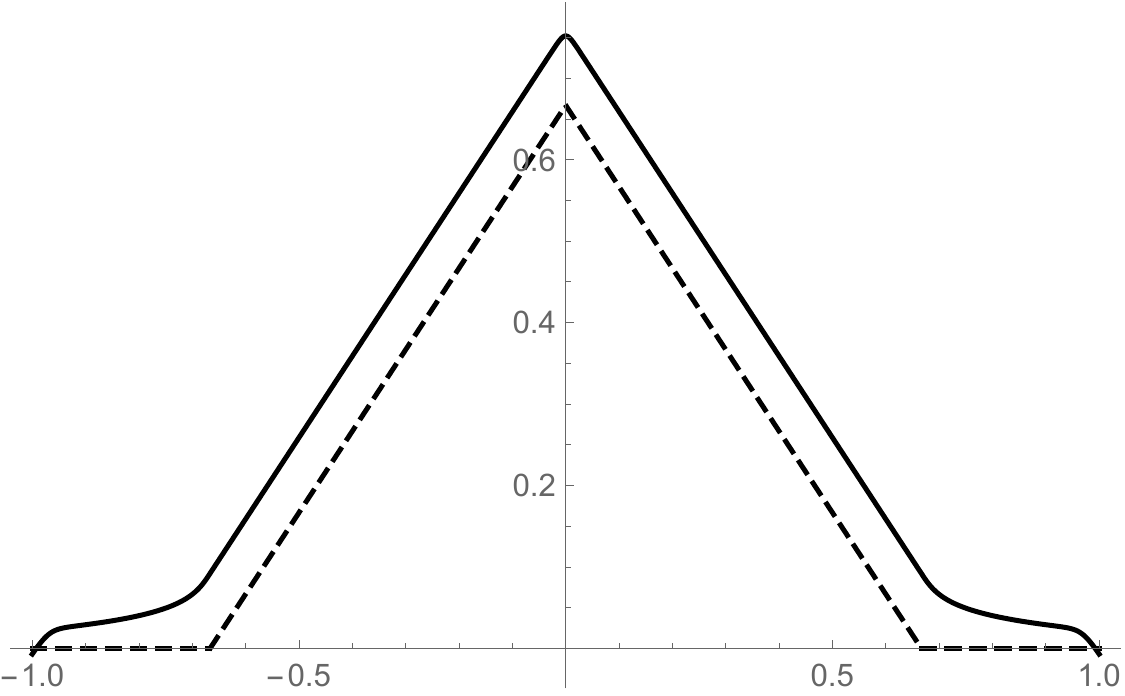}
\end{center}
\end{minipage}
   \caption{The graphs of $f(x)$ (left), $-\sqrt{a} \log u_a (x)$ (right, solid line) and $k-|x|$ (right, dashed line) in the setting of Example \ref{ex:1D} with $h=1$, $k=\frac2{3}$, $\alpha=1$, $a=0.0001$ and $\zeta=2$. }
\label{fig:1D}
\end{figure}

\begin{ex} \label{ex:1D}
Let $u_a$ be the solution to $-au_a'' + u_a =f$ on $(-h,h)$ and $u_a(-h)= u_a(h)=\alpha$, where $a,h,\alpha>0$ and 
\[
f(x) := \begin{cases} 0, & |x| < k, \\   (|x|-k)^\zeta,  & |x| \ge k, \end{cases}
\]
with $\zeta \ge 0$ and $0<k<h$. By direct calculations, we can see that%\footnote{This formula holds for a general integrable even function $f\colon(-h,h)\to\R$. We could also remove the evenness but then the formula would be more complicated.}
\[
u_a(x) = \gamma_a \cosh \frac{x}{\sqrt{a}}   - \frac1{\sqrt{a}} \int_0^x  f(y) \sinh \frac{x-y}{\sqrt{a}}\, \d y, \qquad x\ge0, 
\]
where 
\[
\gamma_a = \frac{\alpha + \frac1{\sqrt{a}} \int_0^h  f(y) \sinh \frac{h-y}{\sqrt{a}}\, \d y }{ \cosh \frac{h}{\sqrt{a}}}. 
\]
Then, as $a\to0^+$, 
\[
\gamma_a \sim \frac{1}{\sqrt{a}e^{\frac{h}{\sqrt{a}}}}  \int_k^h e^{\frac{h-y}{\sqrt{a}}}  \left(y-k\right)^\zeta \, \d y =  \frac{a^{\frac{\zeta+1}{2}}e^{\frac{h-k}{\sqrt{a}}}}{\sqrt{a}e^{\frac{h}{\sqrt{a}}}}  \int_0^{\frac{h-k}{\sqrt{a}}} e^{-z}  z^\zeta \, \d z \sim  \Gamma(1+\zeta) a^{\frac{\zeta}{2}} e^{- \frac{k}{\sqrt{a}}},  
\]
and hence, uniformly on $[0, k]$, 
\begin{align*}
\log u_a(x) 
&=  \log \gamma_a  + \log \cosh \frac{x}{\sqrt{a}}  \\
&=   \log \frac{\gamma_a}{\Gamma(1+\zeta) a^{\frac{\zeta}{2}} e^{- \frac{k}{\sqrt{a}}}} + \log \Gamma(1+\zeta) a^{\frac{\zeta}{2}} e^{- \frac{k}{\sqrt{a}}} +        \log \cosh \frac{x}{\sqrt{a}}  \\
& = \frac{1}{\sqrt{a}}\left( x  - k\right)   + \frac{\zeta}{2}\log a + O(1), \qquad a\to0^+.  
\end{align*}
By symmetry, this yields that, uniformly on $[-k,k]$, 
\[
- \sqrt{a} \log u_a(x) = \left(k-|x|\right)  + \frac{\zeta}{2}\sqrt{a}\log \frac1{a} + O(\sqrt{a}), \qquad a\to0^+.  
\]
%which is faster than the result we obtained for the general dimensions $N$. 
See also Figure \ref{fig:1D}. %For further details see Example \ref{exa:optimal}. 

\end{ex}

The function $k-|x|$ is exactly the distance of the point $x$ to the boundary of the support of $f$, not to the boundary of $\Omega$.  
The appearance of the distance function can be justified in a more general setting, which is the main result of the present paper.

\subsection{Main results}
Throughout this paper, the following assumptions for Equation \eqref{eq:sd} are employed. %$(f=0):= \{x\in \Omega: f(x)=0\}$ and $(f>0):=\{x\in \Omega: f(x)>0\}$.  

\vspace{4mm}
\noindent
{\bf Assumption.}   
\vspace{1mm}

\begin{enumerate}[label=\rm(\roman*),itemsep=0.5em,topsep=0.5em]

   \item\label{item:Omega} $\Omega$ is a bounded Lipschitz domain of $\R^N$ and satisfies an exterior sphere condition at each point of $\partial \Omega$.  
   
  \item\label{item:open}  $f\colon \Omega \to \R$ is measurable, bounded and nonnegative in $\Omega$, and the set $[f>0]:=\{x\in \Omega\colon f(x)>0\}$ is nonempty and open in $\Omega$.

    \item\label{item:away} The set  
$\AA:=\Omega\setminus \overline{[f>0]}$ is nonempty and satisfies $d(\AA, \partial\Omega)>0$, where $d(\AA,\partial \Omega):=\inf_{x\in \AA, y  \in \partial \Omega}|x-y|$. 

\item \label{item:C2'} The above set $\AA$ satisfies a \emph{uniform exterior sphere condition}, i.e., there exists $\e_0>0$ satisfying the following property: for any $\sigma\in\partial \AA$, there exists an open ball $B \subseteq \Omega \setminus \AA$ of radius $\e_0$ tangent to $\partial \AA$ at $\sigma$, i.e., $\sigma \in \partial \AA\cap \partial B$.

    \item\label{item:boundary} $g$ is a nonnegative function on $\partial\Omega$ and $g \in C(\partial \Omega)\cap H^{\frac1{2}}(\partial\Omega)$.   
\end{enumerate}

\begin{figure}[b]
        \centering
        \includegraphics[keepaspectratio, scale=0.3]{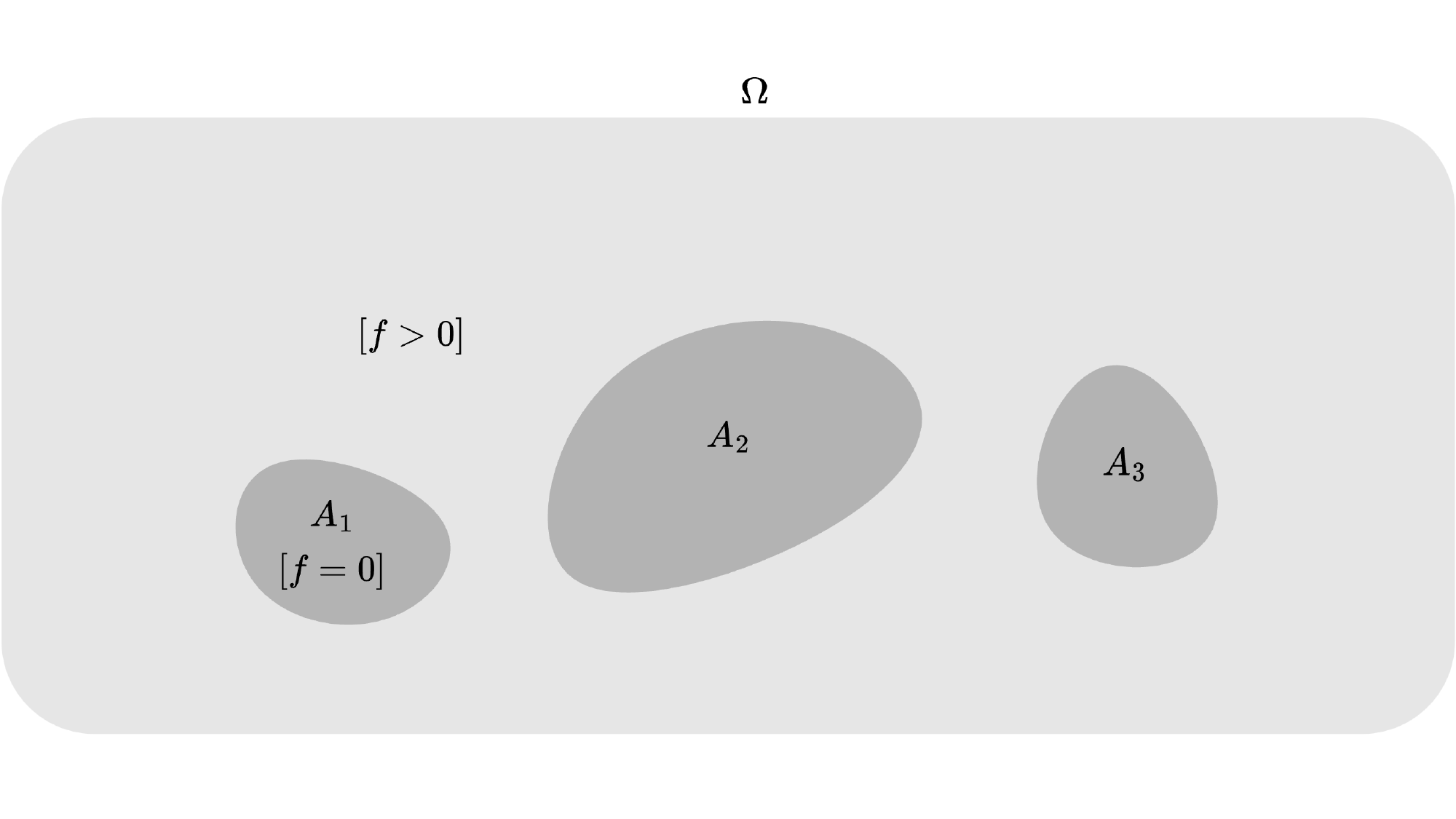}
   \caption{A typical configuration of $\Omega$ and $\AA =\AA_1 \cup \AA_2 \cup \AA_3$.}
\label{fig:concept1}
\end{figure}

\begin{rmk}
In Theorem \ref{C:main}, we also assume a uniform interior sphere condition, i.e., the condition ``$B\subseteq \Omega\setminus \AA$'' in \ref{item:C2'} is replaced with  $B\subseteq\AA$. In  \cite[Corollary 2]{LP20} it is shown that a bounded domain $\AA$ satisfies both uniform exterior and interior sphere conditions if and only if $\AA$ is a $C^{1,1}$-domain. 
\end{rmk}

\begin{rmk}
One could remove the exterior sphere condition for $\Omega$ from Assumption \ref{item:Omega} if one used \cite[Problem 6.3]{GT}. Actually, the exterior sphere condition will be needed only to use \cite[Theorem 6.13]{GT} later. According to \cite[Problem 6.3]{GT}, the exterior \emph{cone} condition, which follows from the Lipschitz boundary condition for $\Omega$, is enough to show \cite[Theorem 6.13]{GT}. However, since we could not find a proof of \cite[Problem 6.3]{GT} in the literature, we keep to assume the exterior sphere condition that is satisfied by practical examples such as balls and rectangles.   
\end{rmk}

As $\Omega$ is Lipschitz and $g \in H^{\frac{1}{2}}(\partial\Omega)$, by the surjectivity of the trace operator (see e.g.~\cite[Theorem 1.5.1.3]{Gri11}), there exists 
$\tilde{g}\in H^1(\Omega)$ such that $\tilde g |_{\partial \Omega} = g$. 
Then according to \cite[Theorem 8.3]{GT}
there exists a unique weak solution to \eqref{eq:sd}, i.e., a function $u_a\in H^1(\Omega)$ such that $u_a -\tilde g \in H^1_0(\Omega)$ and the following holds for all $\varphi\in H^1_0(\Omega)$: 
\begin{equation}\label{eq:sd_weak}
\int_\Omega a\nabla u_a(x)\cdot \nabla \varphi (x)\, \d x+ \int_{\Omega} u_a(x)\varphi(x)\, \d x=\int_\Omega f(x)\varphi(x)\, \d x. 
\end{equation}
Moreover, $u_a$ turns out to be in $W^{2,p}_{\rm loc}(\Omega)\cap C(\overline{\Omega})$ for any $1 \le p <+\infty$ by \cite[Theorem 9.30]{GT} and $u_a$ is positive on $\Omega$, see Subsection \ref{S:positivity}.

Then the main result reads as follows.  
\begin{thm}[Construction of distance function]\label{T:main} Under Assumptions \ref{item:Omega} -- \ref{item:boundary}, 
it holds that, as $a\to0^+$, 
$$
-\sqrt{a}\log u_a(x) \to d (x, \partial \AA)
\quad \text{ uniformly on } \overline{\AA}.   
$$   
\end{thm}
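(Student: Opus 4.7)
My plan is to sandwich $-\sqrt{a}\log u_a(x)$ between two quantities that both tend uniformly to $d(x,\partial\AA)$.

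\textbf{Easy direction} ($\liminf_{a\to0^+}(-\sqrt{a}\log u_a)\ge d(\cdot,\partial\AA)$). The weak maximum principle yields $0<u_a\le M:=\max(\|f\|_\infty,\|g\|_\infty,1)$ on $\overline{\Omega}$. Since $f\equiv 0$ on $\AA$, on each connected component $\AA_i$ of $\AA$ the function $u_a$ solves $-a\Delta u_a+u_a=0$ with $u_a\le M$ on $\partial\AA_i$. Comparing with $Mq_a^{(i)}$, where $q_a^{(i)}$ is Varadhan's solution on $\AA_i$ (i.e.\ $q_a^{(i)}=1$ on $\partial\AA_i$), yields $u_a\le Mq_a^{(i)}$ on $\overline{\AA_i}$. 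Each $\AA_i$ is a bounded $C^2$-domain by \ref{item:away}, so Varadhan's classical theorem gives $-\sqrt{a}\log q_a^{(i)}\to d(\cdot,\partial\AA_i)$ uniformly on $\overline{\AA_i}$; since $-\sqrt{a}\log M\to 0$, the asserted uniform lower bound follows.

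\textbf{Hard direction} ($\limsup_{a\to0^+}(-\sqrt{a}\log u_a)\le d(\cdot,\partial\AA)$). Since $g\ge 0$, comparison gives $u_a\ge v_a$, where $v_a$ solves $-a\Delta v_a+v_a=f$ with $v_a|_{\partial\Omega}=0$; equivalently $v_a(x)=\int_\Omega G_a(x,y)f(y)\,\d y$, with $G_a$ the Dirichlet Green's function of $-a\Delta+I$ on $\Omega$. Note $\partial\AA\subset\overline{[f>0]}$ by \ref{item:open}--\ref{item:away}. For each $x\in\overline{\AA}$ and each $\delta>0$, pick $\sigma\in\partial\AA$ realizing $d(x,\partial\AA)$ and $y\in[f>0]$ with $|y-\sigma|<\delta$; by openness of $[f>0]$ there is a ball $B(y,r)\subset[f>0]$, and by measurability a positive-measure subset $E\subset B(y,r)$ carries a uniform bound $f\ge c>0$ on $E$. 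Combining the lower bound $G_a(x,y)\ge(1-o(1))\Phi_a(x-y)$ (valid for $x,y$ in any compact subset of $\Omega$ since $d(\AA,\partial\Omega)>0$) with the standard $\R^N$ asymptotic $\Phi_a(z)\sim C_N a^{-(N+1)/4}|z|^{-(N-1)/2}e^{-|z|/\sqrt{a}}$ for the fundamental solution of $-a\Delta+I$ produces
\[
v_a(x)\ge c|E|C_N a^{-(N+1)/4}\rho^{-(N-1)/2}e^{-\rho/\sqrt{a}}(1+o(1)),\qquad \rho:=d(x,\partial\AA)+\delta+r.
\]
Taking $-\sqrt{a}\log$ yields $-\sqrt{a}\log u_a(x)\le\rho+O(\sqrt{a}\log(1/a))$; letting $a\to 0^+$ and then $\delta,r\to 0^+$ gives the pointwise limsup bound. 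Uniformity follows by covering the compact set $\partial\AA$ with finitely many witness balls of mesh $\to 0$.

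\textbf{Main obstacle.} The hard direction is the crux. Because $f$ is only measurable and may vanish at $\partial\AA$ at an arbitrary rate (e.g.\ like $d(\cdot,\partial\AA)^\zeta$ in Example~\ref{ex:1D}), no uniform positive lower bound for $f$ near $\partial\AA$ is available, and the parameters $\delta,r,c$ must be coordinated throughout a uniform covering argument. In addition, the pointwise lower bound for the Dirichlet Green's function $G_a(x,y)$ in terms of the $\R^N$ fundamental solution---together with a sharp asymptotic for the latter---must be established with boundary corrections that are subexponential in $1/\sqrt{a}$; here the assumption $d(\AA,\partial\Omega)>0$ from \ref{item:away} is essential, as is the uniform exterior-sphere property~\ref{item:C2'}.
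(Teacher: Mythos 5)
Your proposal sandwiches $-\sqrt{a}\log u_a$ between the right two quantities, but the route you take is genuinely different from the paper's. For the lower bound you appeal to Varadhan's theorem component‑by‑component after the comparison $u_a\le M q_a^{(i)}$; the paper instead re‑derives the lower bound from scratch by comparing $u_a$ on $B_{d(y,\partial\AA)}(y)$ with the explicit radial solution of $a\Delta w=w$, which is more elementary and is what later produces the explicit rate $C\sqrt{a}\log\frac1a$ in Theorem~\ref{T:rate}. For the upper bound, where the real work lies, you represent $u_a\ge v_a=\int_\Omega G_a(x,y)f(y)\,\d y$ and push a Green's‑function lower bound $G_a\ge(1-o(1))\Phi_a$ (valid since $d(\AA,\partial\Omega)>0$ makes the boundary corrector exponentially smaller by the maximum principle) together with the Bessel‑type asymptotic of $\Phi_a$. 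The paper avoids Green's functions entirely: it introduces $\beta_a:=\inf_{\partial\AA}u_a$, compares $u_a$ on $\AA$ from below with the explicit solution on the exterior of a ball $B_{\sqrt{a}}(\tilde y)$ tangent to $\partial\AA$ (this is where the uniform exterior‑sphere condition \ref{item:C2'} enters) to reduce everything to $\sqrt{a}\log\beta_a\to0$, and then proves this via a mean‑value identity (Lemma~\ref{P:mean-value}) coupled with a global $H^1$ energy bound and a mollifier test function to show $\vmean_\epsilon^{y}(u_a)\to\vmean_\epsilon^{y}(f)>0$. Your scheme is logically sound and arguably more conceptual, but it leans on heavier off‑the‑shelf machinery (Green's function representations and sharp fundamental‑solution asymptotics for $-a\Delta+I$), whereas the paper's argument is self‑contained, quantitative, and directly recyclable for the rate result. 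One small misattribution in your ``Main obstacle'' paragraph: the exterior‑sphere property \ref{item:C2'} is \emph{not} needed for your Green's‑function route — only $d(\AA,\partial\Omega)>0$ is; \ref{item:C2'} is specific to the paper's tangent‑ball comparison in Step~1. Also be aware that your lower‑bound shortcut through Varadhan implicitly inherits whatever uniformity Varadhan's theorem supplies on $C^2$ domains, which is fine here but worth stating, and that the covering argument you sketch does need the per‑witness constants $c_\sigma,|E_\sigma|$ to be uniformly bounded below across the finite cover — straightforward since the cover is finite, but the $a_0$ threshold then depends on $\delta,r$ and must be sent to zero before $\delta,r\to0$, in the correct order.
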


\begin{rmk}
  As the above result shows, to construct the distance function $d(\cdot,\partial \AA)$, there is arbitrariness for selecting $\Omega$ and $g$. In \cite{OSY25}, $\Omega$ was simply a rectangle and a simple boundary function $g\equiv1$ was selected. Nevertheless, since specializing to such simple examples does not simplify the proofs, we work under the most general assumptions on $\Omega$ and $g$.    
\end{rmk}
%It is noteworthy that the boundary function $g$ and the base domain $\Omega$ do not appear in the limiting distance function. 

The convergence rate can also be obtained by carefully estimating the steps in the proof of Theorem \ref{T:main}. Let $B_\epsilon(y)$ stand for the open ball of radius $\e>0$ centered at $y\in \AA$ and $\vmean_\epsilon^y(H)$ be the volume mean of a function $H$ in the ball $B_\epsilon(y)$: 
$$
\vmean_\epsilon^y(H)
:=
\frac{1}{| B_\epsilon(y)|}
\int_{B_\epsilon(y)} H(x)\, \d x.
$$
In order to get a rate of convergence, we put a moderate assumption on the volume mean of $f$.

\begin{thm}[Rate of convergence]\label{T:rate} In addition to   Assumptions \ref{item:Omega} -- \ref{item:boundary}, 
we further assume that there exists $\zeta \ge0$ such that
\begin{equation} \label{eq:mean_f}
0<  \inf_{\substack{0< \epsilon <1\\ y \in \partial \AA}}\epsilon^{-\zeta p} \vmean_\epsilon^y(f^p) \le  \sup_{\substack{0< \epsilon <1\\ y \in \partial \AA}}\epsilon^{-\zeta p} \vmean_\epsilon^y(f^p) <\infty, \qquad  p\in\{1,2\}. 
\end{equation}
 Then for every $0<\tau <1/2$ there exists a constant $C>0$ such that
\begin{equation*}%\label{eq:rate}
\sup_{x\in \overline{\AA}} \left| -\sqrt{a} \log u_a(x) - d(x, \partial \AA)\right| \le Ca^{\frac1{2}-\tau} \quad \text{for all} \quad 0<a<\frac1{2}.  
\end{equation*}
Moreover, if $\zeta=0$ then the bound $C a^{\frac1{2}-\tau}$ can be replaced by $C \sqrt{a} \log \frac{1}{a}$ that is optimal at least in dimension one, see Example \ref{ex:1D}. 
\end{thm}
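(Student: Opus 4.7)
The plan is to revisit the proof of Theorem~\ref{T:main} and quantify each step. The argument decomposes into two parts: (I) two-sided pointwise bounds on $u_a$ restricted to $\partial\AA$ in terms of a power of $a$ governed by $\zeta$, and (II) propagation of those bounds into $\AA$ via a quantitative Varadhan-type asymptotic for the homogeneous Helmholtz equation $-a\Delta v + v = 0$. For step~(I), by the weak maximum principle applied to \eqref{eq:sd} we have $u_a \le M_0 := \max(\|f\|_{L^\infty},\|g\|_{L^\infty})$ on $\overline\Omega$. The sharper estimate on $\partial\AA$ comes from Assumption~\ref{item:C2'}: for each $y\in\partial\AA$ there is a ball $B_\e(y')\subset\Omega\setminus\AA$ tangent to $\partial\AA$ at $y$. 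Choosing $\e \asymp \sqrt a$ and building an explicit subsolution of \eqref{eq:sd} from the scaled fundamental solution of $-a\Delta + 1$ with source $f\chi_{B_\e(y')}$, the lower bound in \eqref{eq:mean_f} with $p=1$ (i.e.~$\vmean_\e^{y'}(f)\ge c\e^\zeta$) yields $u_a(y)\ge c_1\,a^{\zeta/2}$. The upper bounds in \eqref{eq:mean_f} (in particular for $p=2$, via a Cauchy--Schwarz / $L^2$-energy argument) give the matching $u_a(y)\le C_1\,a^{\zeta/2}$.

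For step~(II), since $f\equiv 0$ on $\AA$, on each connected component $\AA_j$ the function $u_a$ solves $-a\Delta u_a + u_a = 0$. Let $\tilde u_a^{(j)}$ solve the same homogeneous equation in $\AA_j$ with boundary value $1$ on $\partial\AA_j$. The comparison principle gives $c_1\,a^{\zeta/2}\,\tilde u_a^{(j)} \le u_a \le C_1\,a^{\zeta/2}\,\tilde u_a^{(j)}$ in $\AA_j$. A quantitative Euclidean Varadhan estimate --- proved by comparing $\tilde u_a^{(j)}$ against radial sub- and supersolutions on interior and exterior balls tangent to $\partial\AA_j$, whose radii are controlled by the $C^2$-regularity in Assumption~\ref{item:away} and the uniform exterior sphere condition of Assumption~\ref{item:C2'} --- delivers $-\sqrt a\,\log\tilde u_a^{(j)}(x) = d(x,\partial\AA) + O(\sqrt a\,\log(1/a))$ uniformly on $\overline{\AA_j}$. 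Combining with step~(I) and using $\sqrt a\,|\log a^{\zeta/2}| = (\zeta/2)\sqrt a\,\log(1/a)$,
\[
-\sqrt a\,\log u_a(x) = d(x,\partial\AA) + O\bigl(\sqrt a\,\log(1/a)\bigr) \qquad \text{uniformly on } \overline{\AA}.
\]
For $\zeta=0$ this is the sharp $C\sqrt a\,\log(1/a)$ bound, matching Example~\ref{ex:1D}; for general $\zeta$, the inequality $\sqrt a\,\log(1/a)\le C_\tau\,a^{1/2-\tau}$ valid as $a\to 0^+$ for every $\tau\in(0,1/2)$ converts this into the stated bound.

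The main obstacle, as I see it, lies in step~(I): converting the integral-mean hypothesis \eqref{eq:mean_f} into a sharp \emph{uniform} pointwise estimate for $u_a$ on $\partial\AA$. The delicacy is balancing the comparison-ball radius $\e$ against $\sqrt a$ so that the ball simultaneously lies on the correct side of $\partial\AA$ (where $f$ has mass) and carries enough of it; the uniform exterior sphere radius $\e_0$ from Assumption~\ref{item:C2'} is precisely what makes the constants $c_1,C_1$ independent of $y\in\partial\AA$, and the $p=2$ hypothesis is needed to produce the matching upper bound beyond the trivial $u_a\le M_0$. The quantitative Varadhan estimate in step~(II) is classical in spirit but must be executed carefully near $\partial\AA$, where the $C^2$-regularity is essential for a uniform $O(\sqrt a\,\log(1/a))$ error.
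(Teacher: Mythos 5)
Your approach is genuinely different from the paper's and, if carried out carefully, would actually \emph{sharpen} the stated theorem: the paper proves the rate $O(a^{1/2-\tau})$ for general $\zeta$ and attains $O(\sqrt{a}\log(1/a))$ only when $\zeta=0$, whereas your argument --- once step~(I) delivers $u_a(y)\asymp a^{\zeta/2}$ uniformly on $\partial\AA$ --- gives $O(\sqrt{a}\log(1/a))$ for every $\zeta\ge0$, matching Example~\ref{ex:1D}. The paper never controls $u_a$ pointwise on $\partial\AA$. It works instead with $\beta_a:=\inf_{\partial\AA}u_a$ and proves only the much weaker $\sqrt{a}\log\beta_a\ge -Ca^{1/2-\tau}$, via three ingredients: a volume-mean comparison built on Lemma~\ref{P:mean-value} yielding $\sqrt{a}\log\beta_a\ge -\epsilon+\sqrt{a}\log\vmean_\epsilon^{y_a}(u_a)$; a local $L^2$-energy estimate (Lemma~\ref{lem:local_energy}) iterated over dyadic balls to obtain $\|u_a\|_{L^2(B_\epsilon(y))}^2\le C\epsilon^{N+2\zeta}$; and mollified test functions in the weak form \eqref{eq:sd_weak} to show $\vmean_\epsilon^y(u_a)$ tracks $\vmean_\epsilon^y(f)\asymp\epsilon^\zeta$, with the parameter choice $\epsilon=a^{1/2-\tau}$, $\delta=a^{1/2-\tau'}$ producing the rate. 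This energy route avoids explicit Green's-function kernels (it only uses Lemma~\ref{P:mean-value} for the single mean-value inequality) and is arguably more robust, but pays with the suboptimal power. Your Green's-function route is more direct and gets the pointwise $a^{\zeta/2}$; your step~(II) is then essentially the paper's two radial comparison arguments (Step~1 of Subsection~\ref{S:claim2} and the lower-bound half in Subsection~\ref{S:lower}) applied to $\tilde u_a^{(j)}$ with constant boundary data $1$, for which $\beta_a\equiv1$ so those estimates immediately read $-\sqrt{a}\log\tilde u_a^{(j)}=d(\cdot,\partial\AA)+O(\sqrt{a}\log(1/a))$.

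Two fixable issues in your step~(I). First, center the comparison ball at $y\in\partial\AA$, not at the exterior tangent center $y'$: assumption~\eqref{eq:mean_f} bounds $\vmean_\epsilon^y(f)$ only for $y\in\partial\AA$, and the part of $B_\epsilon(y)$ carrying the mass of $f$ need not lie in $B_\epsilon(y')$ (the lens argument fails). Comparing $u_a$ from below with the solution of $-a\Delta v+v=f\chi_{B_\epsilon(y)}$ on $B_{3\epsilon}(y)$ with zero boundary data, and using the scaling $G_a(\cdot,\cdot)=a^{-N/2}G_1(\cdot/\sqrt{a},\cdot/\sqrt{a})$, gives with $\epsilon=\sqrt{a}$ that $u_a(y)\ge\bigl(\inf_{|z-y|\le\epsilon}G_{B_{3\epsilon}(y)}(y,z)\bigr)\int_{B_\epsilon(y)}f\ge c\,a^{-N/2}\cdot c'\epsilon^{N+\zeta}\gtrsim a^{\zeta/2}$, uniformly in $y$, as you want. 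Second, a global Cauchy--Schwarz on $\int G_a f$ does not give $Ca^{\zeta/2}$ for $N\ge4$ (the free kernel $G_a(y-\cdot)$ is not square-integrable near the pole); one must decompose dyadically in $|y-z|$ first and apply the $p=2$ (or even just $p=1$) hypothesis on each annulus. In any case the matching upper bound $u_a|_{\partial\AA}\le Ca^{\zeta/2}$ is not needed for the theorem: the estimate $-\sqrt{a}\log u_a(x)-d(x,\partial\AA)\ge -C\sqrt{a}\log(1/a)$ on $\overline{\AA}$ was already established in \eqref{eq:lower_bound_ua} using only the coarse bound $u_a\le M$, so you only require the lower bound on $u_a|_{\partial\AA}$ and can discard the other half of step~(I).
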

\begin{rmk}
    In case of $f\equiv0$ and $g\equiv1$, $u_a$ coincides with the solution 
$q_a$ to Equation \eqref{eq:Varadhan} and $\AA$ coincides with $\Omega$ as $[f>0]=\emptyset$. Although this case is excluded from our main result due to Assumption \ref{item:away}, our proof can be easily fixed to treat this case. More precisely, under the modified assumption 
\begin{enumerate}[label=\rm(\roman*'),topsep=0.7em]

   \item\label{item:Omega'} $\Omega$ is a bounded domain of $\R^N$ with a   uniform exterior sphere condition,   
 \end{enumerate}
we can prove the bound 
 \begin{equation}\label{eq:rate_Varadhan}
\sup_{x\in \overline{\Omega}} \left| -\sqrt{a} \log q_a(x) - d(x, \partial \Omega)\right| \le C\sqrt{a}\log\frac{1}{a} \quad \text{for all} \quad 0<a<\frac1{2},    
\end{equation} 
which seems a new result to the best of authors'  knowledge. 
In fact, the most difficult parts, Steps 2 and 3 in Subsection \ref{S:lower} and arguments in Section \ref{S:rate}, are no longer  needed, so that the proof is much simpler. This rate of convergence is optimal in dimensions $N\ge2$ from an explicit estimate in the case of ball domains; see Appendix \ref{app:rate}.  It is worth mentioning that Tran studied more general possibly nonlinear equations and obtained the weaker bound $C a^{\frac1{4}}$ \cite[Theorem 3.3]{Tra11}, see also Remark \ref{rmk:viscosity} below. 
\end{rmk}

\begin{rmk}
    
Condition \eqref{eq:mean_f} is technical and the exponent $\zeta$ only appears implicitly in the constant $C$. Whether this condition can be removed or not is unclear. The number $\zeta$  represents the  smoothness (or the speed of damping) of $f$ near $\partial \AA$. This condition is satisfied by many practical examples, e.g.\ the characteristic function $f=\chi_{\Omega \setminus \overline{\AA}}$ $(\zeta=0)$ and functions of the form $f(x)=\max\{0, p(x)\}$ where $p$ is a smooth function  (typically $\zeta \ge 1$). On the other hand, functions $f(x)$ that go to zero  exponentially fast as $d(x,\partial \AA)\to0$ provide counterexamples (``$\zeta=+\infty$''). See Examples \ref{ex:f1} -- \ref{ex:f3} for further details.  %Note that the convergence rate $C\sqrt{a}\log\frac1{a}$ is optimal (at least in dimension one) as Example \ref{ex:1D} shows. 
\end{rmk}

In the special case where the source function $f$ is a characteristic function, we can also capture a distance function on a part of the complement of $\AA$. Also, in this case, we can get the optimal convergence rate.

\begin{thm}[Construction of (local) signed distance function]\label{C:main}
In addition to Assumptions \ref{item:Omega} -- \ref{item:boundary}, we also assume a uniform interior sphere condition for $\AA$.    We select $f:= C^* \chi_{\Omega \setminus \overline{\AA}}$ with any constant $C^* \ge \sup_{x\in \partial\Omega}g(x)$ and define  
 $U_a\in H^1(\Omega)$ as 
\begin{align}\label{eq:sdist}
    U_a(x):=
\begin{cases}
\sqrt{a}\log u_a(x),\quad &x \in \overline{\AA}, \\ 
-\sqrt{a}\log ( C^* -u_a(x)),\quad &x\in \Omega \setminus \overline{\AA} 
\end{cases}
\end{align}
and the signed distance function 
\[
\hat{d}(x,\partial \AA):= 
\begin{cases}
-d(x,\partial \AA), & x \in \overline{\AA}, \\
d(x, \partial \AA), & x \in \Omega \setminus \overline{\AA}. 
\end{cases}
\]
Then there exists $C>0$ such that 
\begin{equation}\label{eq:rate_signed_distance}
\sup_{x \in \Omega^*} |U_a(x) - \hat{d}(x,\partial \AA)| \le C \sqrt{a} \log \frac1{a} \quad\text{for all} \quad 0< a< \frac{1}{2}, 
\end{equation}
where  $\Omega^\ast:=\{ x\in \Omega\colon d(x,\partial \AA)\le d(x,\partial\Omega)\}$. 
\end{thm}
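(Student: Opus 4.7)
The plan is to reduce Theorem \ref{C:main} to two parallel estimates via the substitution $v_a := C^* - u_a$. First I would verify that $C^*$ is a supersolution of \eqref{eq:sd}: since $f\le C^*$ in $\Omega$ and $g\le C^*$ on $\partial \Omega$, one has $-a\Delta C^* + C^* = C^* \ge f$ and $C^*\ge g$, so comparison gives $0<u_a\le C^*$, hence $v_a\ge 0$. A direct computation then shows that
\begin{equation*}
-a\Delta v_a + v_a = C^* \chi_{\overline \AA} \quad \text{in } \Omega, \qquad v_a = C^* - g \ge 0 \quad \text{on } \partial\Omega,
\end{equation*}
i.e., $v_a$ solves an equation of the same type as \eqref{eq:sd} with the roles of $\overline \AA$ and $\Omega\setminus\overline \AA$ interchanged. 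Accordingly, the estimate \eqref{eq:rate_signed_distance} splits into (I) a bound for $|\sqrt{a}\log u_a(x) + d(x,\partial \AA)|$ on $\overline \AA \cap \Omega^*$, and (II) a bound for $|-\sqrt{a}\log v_a(x) - d(x,\partial \AA)|$ on $(\Omega\setminus\overline \AA)\cap \Omega^*$.

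Part (I) would follow directly from Theorem \ref{T:rate} applied to $u_a$. Since $\partial \AA$ is of class $C^2$, for each $y\in\partial \AA$ the volume fraction $|B_\epsilon(y)\setminus\overline \AA|/|B_\epsilon(y)|$ tends to $\tfrac12$ as $\epsilon\to 0$, uniformly in $y$. Hence the source $f=C^*\chi_{\Omega\setminus\overline \AA}$ satisfies \eqref{eq:mean_f} with $\zeta=0$, and the sharpened part of Theorem \ref{T:rate} yields precisely the rate $C\sqrt{a}\log(1/a)$ on $\overline \AA$, and in particular on $\overline \AA \cap \Omega^*$.

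Part (II) is the core of the proof. Here the idea is to rerun the proof of Theorem \ref{T:rate} with $u_a$ replaced by $v_a$ and $\overline \AA$ replaced by $\overline{\Omega\setminus\overline \AA}$. Assumption \ref{item:away} fails for the dual problem because the new ``$\AA$'' touches $\partial\Omega$, but the restriction $x\in\Omega^*$ precisely compensates: when $d(x,\partial \AA)\le d(x,\partial \Omega)$, the open ball $B_{d(x,\partial \AA)}(x)$ lies in $\Omega\setminus\overline \AA$, so $v_a$ solves the homogeneous equation $-a\Delta v_a + v_a=0$ there, and the boundary data on $\partial \Omega$ can contribute at most something of order $e^{-d(x,\partial \Omega)/\sqrt{a}}$, absorbed by the dominant source contribution $e^{-d(x,\partial \AA)/\sqrt{a}}$. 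Concretely, I would derive an upper bound $v_a(x)\le Ca^{-\gamma}e^{-d(x,\partial \AA)/\sqrt{a}}$ by comparison with the positive radial (modified-Bessel) solution of $-a\Delta w + w = 0$ in $B_{d(x,\partial \AA)}(x)$ with boundary value $C^*$, and a matching lower bound $v_a(x)\ge ca^{\gamma'}e^{-d(x,\partial \AA)/\sqrt{a}}$ by placing a translated radial subsolution at the nearest point $y\in\partial \AA$, using the exterior-sphere property \ref{item:C2'}. The boundary-layer value needed to feed this subsolution, namely that $v_a$ is close to $C^*$ just inside $\partial \AA$, is supplied by part (I), which shows $u_a$ is exponentially small there.

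The main obstacle is the construction of these compatible sub- and supersolutions together with the boundary-layer matching across $\partial \AA$. The delicate point is that the prefactors in front of the exponentials must be kept as fixed powers of $a$, so that after taking $-\sqrt{a}\log$ they contribute only $O(\sqrt{a}\log(1/a))$; this is the same mechanism that produces the optimal $\sqrt{a}\log(1/a)$ rate in the $\zeta=0$ case of Theorem \ref{T:rate}, and it is what rules out the weaker $a^{1/2-\tau}$ decay.
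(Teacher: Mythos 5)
Your overall plan---reflecting via $v_a := C^* - u_a$, noting that $v_a$ solves the same type of equation with source $C^*\chi_{\overline{\AA}}$, quoting Theorem~\ref{T:rate} on $\overline{\AA}$, and then rerunning the machinery of Section~\ref{S:rate} on $\Omega^*\setminus\overline{\AA}$ using the exterior-sphere balls now placed inside $\AA$---is exactly the paper's route, and the comparison arguments you sketch for the sub- and supersolutions of $v_a$ are the right ones.

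However, there is a real gap in the way you propose to control the boundary-layer constant $\beta_a^* := \inf_{y\in\partial\AA} v_a(y)$. You write that the needed lower bound for $v_a$ near $\partial\AA$ ``is supplied by part~(I), which shows $u_a$ is exponentially small there.'' This is incorrect. Part~(I) gives $-\sqrt{a}\log u_a(x) = d(x,\partial\AA) + O(\sqrt{a}\log\tfrac1a)$ on $\overline{\AA}$; on $\partial\AA$ itself $d(x,\partial\AA)=0$, so this only yields $a^{C} \le u_a(x) \le C^*$. In particular $u_a$ is \emph{not} small near $\partial\AA$ (it is small deep inside $\AA$, not at the interface), and the estimate is perfectly consistent with $u_a$ being as large as $C^*$ on $\partial\AA$, i.e.\ with $v_a$ vanishing there. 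So part~(I) does not give any useful lower bound on $\beta_a^*$, and the subsolution in the tangent ball $B_{\sqrt{a}}(\tilde y)\subset\AA$ cannot be fed from it. What is actually needed is the analogue of Steps~2 and~3 of Subsection~\ref{S:claim2} for $v_a$: the mean-value inequality $\sqrt{a}\log\beta_a^* \ge -\epsilon + \sqrt{a}\log\vmean_\epsilon^{y_a^*}(v_a)$, together with the uniform convergence $\vmean_\epsilon^y(v_a) \to \vmean_\epsilon^y(C^*\chi_{\overline{\AA}}) = C^*/2>0$. Note that this last limit does follow essentially for free from what was proved for $u_a$, since $\vmean_\epsilon^y(v_a) = C^* - \vmean_\epsilon^y(u_a)$ and $\vmean_\epsilon^y(f)\to C^*/2$; but the mean-value inequality of Step~2 (which rests on $v_a$ being a supersolution of $a\Delta w = w$) still has to be re-derived for $v_a$ and cannot be read off part~(I). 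Once you replace the incorrect boundary-layer claim by this argument, the rest of your sketch is correct and matches the paper.
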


\begin{rmk}\label{rmk:viscosity}
    Closely related problems were studied in the context of the method of vanishing viscosity for viscosity solutions to Hamilton--Jacobi equations. First, applying the Cole-Hopf transform $p_a(x) = -\sqrt{a} \log u_a(x)$ to our equation \eqref{eq:sd} yields  
\begin{equation*}
\begin{cases}
-\sqrt{a}\Delta p_a+|\nabla p_a|^2-1 =-f\exp(\frac{p_a}{\sqrt{a}})& \text{ in } \Omega, \\
p_a = -\sqrt{a}\log g & \text{ on }  \partial \Omega. 
\end{cases}
\end{equation*}
Since $f$ vanishes in $A$, the above equation implies   
 \begin{equation}\label{eq:eikonal}
-\sqrt{a}\Delta p_a+|\nabla p_a|^2-1 =0 \quad \text{ in } A. 
\end{equation}
Equations more general than \eqref{eq:eikonal} were treated in \cite{FS86} and \cite{Tra11} with zero boundary condition on $\partial A$. In \cite[Theorem 5.1]{FS86} an asymptotic expansion was obtained. 
In \cite[Theorem 3.3]{Tra11}, the rate of convergence $a^{\frac1{4}}$ was obtained as mentioned above. In our problem, the  values of $p_a$ on $\partial A$ are not a priori given. 
The major part of our proof is devoted to the estimate of these boundary values. 
The same rate of convergence $a^{\frac1{4}}$ has been obtained in \cite[Proposition 6.1]{L} (see also the remark after the proposition) when the domain is whole space $\R^N$.
For optimal convergence rates in the vanishing viscosity limit for quadratic (and more generally uniformly convex) Hamilton--Jacobi equations, see \cite{CD}.
\end{rmk}

\subsection{Structure and technical contribution of this paper}
This paper is composed of four sections.
Section \ref{S:main} is devoted to proving Theorem \ref{T:main}. 
The basic strategy of the proof is similar to \cite{V67}: we use explicit comparison functions 
for $u_a$ to obtain upper and lower bounds of $-\sqrt{a}\log u_a$. The upper estimate of $u_a$ is obtained rather directly from the comparison technique.  
However, the lower estimate of $u_a$ is much more involved than that in \cite{V67} and requires a substantially new idea. In \cite{V67}, the values of the solution on the interface (i.e.,~$\partial\Omega$) are constant and are independent of $a>0$ due to the inhomogeneous Dirichlet boundary condition, see \eqref{eq:Varadhan}. On the other hand, the values of $u_a$ on the interface $\partial A$ of our target depend on $a$. 
This is the most delicate and complex issue in this paper. 
To overcome this difficulty, we shall elaborate on the lower bound of the solution on the interface, i.e., $\beta_a:=\inf_{x\in \partial A}u_a(x)$, and then prove $\sqrt{a}\log \beta_a\to 0$ as $a\to 0^+$, which plays a crucial role in the proof
(the uniform lower bound $\inf_{x\in \Omega}u_a(x)$ is generally too small and seems to be useless for the proof). 
To prove this, we shall establish a bound for $\sqrt{a} \log\beta_a$ in terms of the volume mean of $u_a$ on small balls by comparing $u_a$ with the explicit solution to \eqref{eq:sd} (see Lemma \ref{P:mean-value}) in the case where $f=0$ and $\Omega$ is a ball. Then we find a bound for the volume mean making use of a global energy bound for $u_a$, which concludes the proof of    $\sqrt{a}\log \beta_a\to 0$. 

Section \ref{S:rate} deals with the rate of convergence to the distance function.
This is achieved by showing a fine rate of convergence  $\sqrt{a}\log \beta_a\to 0$. For this purpose, we refine the global energy estimate of $u_a$ to a local one.  
%we shall derive a quantitative rate.   

Section \ref{sec:signed} provides a sketch of the proof of Theorem \ref{C:main}. 
It is quite similar to that of Theorem \ref{T:main}.

\section{Proof of Theorem  \ref{T:main}}\label{S:main}

%In this section, we shall first prove Theorem \ref{T:main} and then, Theorem \ref{C:main} will be proved. 

\subsection{Preparation: surface mean and volume mean}\label{S:claim1}

Let $\eta>0$, $y\in \R^N$, $F\in C(\partial B_\eta(y))$ and $\smean_\eta(F)$ stand for the surface mean of $F$ on $\partial B_\eta(y)$, i.e.,
$$
\smean_\eta^y(F)
:=
\begin{cases}
\displaystyle \frac{1}{|\partial B_\eta(y)|}
\int_{\partial B_\eta(y)} F(x)\, \d \sigma_x, & \text{if}~N\ge2, 
\\[8mm]
\displaystyle\frac{F(y+\eta) + F(y-\eta)}{2}, & \text{if}~N=1. 
\end{cases}
$$
For a function $H \in C(D)$ with $\partial B_\eta(y) \subseteq D \subseteq \R^N$, we simply denote by $\smean_\eta^y(H)$ the surface mean of the restriction $H|_{\partial B_\eta(y)}$ on $\partial B_\eta(y).$

Recall that the volume mean and surface mean are related as follows.  
\begin{lem}\label{L:means} Let $H\in C(\overline{B_\eta(y)})$. Then  
\begin{equation*}%\label{eq:polar}
    \vmean_\eta^y(H) 
    = \frac{N}{\eta^N} \int_0^\eta  r^{N-1}   \smean_r^y(H)\, \d r.   
\end{equation*}
\end{lem}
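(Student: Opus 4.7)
The plan is to reduce the integral defining the volume mean to an iterated integral over concentric spheres via the co-area formula (or, equivalently, spherical coordinates). This is a standard identity, so the proof should be short.

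First I would recall that for $N\ge 2$ one has $|B_\eta(y)| = \omega_N \eta^N$ and $|\partial B_r(y)| = N\omega_N r^{N-1}$, where $\omega_N$ is the volume of the unit ball in $\R^N$. Using spherical coordinates centered at $y$ (or the co-area formula applied to the Lipschitz function $x\mapsto |x-y|$, whose gradient has unit norm), I would write
\begin{equation*}
\int_{B_\eta(y)} h(x)\,\d x
= \int_0^\eta \!\!\int_{\partial B_r(y)} h(x)\,\d\sigma_x\,\d r
= \int_0^\eta N\omega_N r^{N-1}\, \smean_r^y(h)\,\d r,
\end{equation*}
where the inner integral was rewritten using the definition of the surface mean $\smean_r^y(h)$. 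Dividing by $|B_\eta(y)|=\omega_N \eta^N$ cancels $\omega_N$ and yields the desired identity.

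For $N=1$, I would simply compute directly: $|B_\eta(y)|=2\eta$, so
\begin{equation*}
\vmean_\eta^y(h) = \frac{1}{2\eta}\int_{y-\eta}^{y+\eta} h(x)\,\d x
= \frac{1}{2\eta}\int_0^\eta \bigl(h(y+r)+h(y-r)\bigr)\,\d r
= \frac{1}{\eta}\int_0^\eta \smean_r^y(h)\,\d r,
\end{equation*}
which matches the stated formula when $N=1$ (since then $r^{N-1}=1$ and the prefactor is $N/\eta^N=1/\eta$).

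There is no real obstacle here; the only subtleties are to invoke the co-area formula (or Fubini in spherical coordinates) with the correct justification given only continuity of $h$ on a compact set, and to verify that the $N=1$ case matches the formula, which it does because $\smean_r^y(h)$ was defined separately in that dimension as the average of the two endpoint values.
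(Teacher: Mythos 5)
Your proof is correct and uses essentially the same approach as the paper: both decompose the volume integral into an iterated integral over concentric spheres via polar coordinates (the paper writes $\d x = |\partial B_1(0)|\,r^{N-1}\,\d r\,\d\omega$, while you invoke the co-area formula, which amounts to the same thing), and both handle $N=1$ by a separate direct computation.
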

\begin{proof}
For $N=1$ the formula can be directly checked. We assume $N\ge2$. In the polar coordinate $x = r \omega + y~ (r >0,~\omega \in \partial B_1(0))$, 
it holds that $\d x = |\partial B_r(0)|\,\d r\d\omega =  |\partial B_1(0)|r^{N-1}\,\d r\d\omega$ (see Remark \ref{rem:polar} below).
This yields
\begin{align*}
\int_{B_\eta(y)} H(x) \,\d x 
    &=  |\partial B_1(0)|\int_0^\eta r^{N-1}\left(  \int_{\partial B_1(0)} H(r \omega + y) \, \d\omega\right) \d r \displaybreak[0]
\\
    &= |\partial B_1(0)| \int_0^\eta r^{N-1} \smean_r^y(H) \,\d r.  
\end{align*}
The desired formula follows from the relation $ |\partial B_1(0)|=N|B_1(0)|$. 
\end{proof}
\begin{rmk}\label{rem:polar}
  A parametric representation of polar coordinate is given e.g.\ by $\omega_i=\left[\prod_{k=1}^{i-1} \sin \theta_k\right] \cos \theta_i~(1\le i \le N-1)$, $\omega_{N}=\prod_{k=1}^{N-1} \sin \theta_k$, $0\le \theta_i \le \pi~(1\le i \le N-2)$, $0\le \theta_{N-1} \le 2\pi$. Then the volume element is $\d x = \d r \d \sigma^r$, where $ \d\sigma^r:= r^{N-1} \prod_{i=1}^{N-1}\left[ \sin^{N-1-i} \theta_i \,\d\theta_i\right]$ is the surface area element on $\partial B_r(0)$ and $\d \omega := \d \sigma^1/|\partial B_1(0)|$  is the uniform distribution on the unit sphere.
\end{rmk}

\begin{lem}\label{P:mean-value} 
Let $H\in C^2(B_\eta(y)) \cap C\big( \overline{B_\eta(y)}\big)$ be a classical solution to the modified Helmholtz equation 
\begin{equation*}%\label{eq:V}
a\Delta H = H \quad\text{ in }\quad B_\eta(y). 
\end{equation*}
Then it holds that 
\[
H(y) = 
\begin{cases}
\displaystyle \frac{\smean_\eta^y(H) \int_0^\pi \sin^{N-2}\theta \,\d\theta}{\int_0^\pi \cosh\left(\frac{\eta \cos \theta}{  \sqrt{a}} \right)\sin^{N-2}\theta \,\d\theta} , & \text{if}~ N\ge2, \displaybreak[0]
\\[8mm]
\displaystyle \frac{\smean_\eta^y(H)}{\cosh\left(\frac{\eta}{\sqrt{a}}\right)}, & \text{if}~N=1. 
\end{cases}
\]
\end{lem}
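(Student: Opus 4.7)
The overall strategy is to reduce the assertion to a one-dimensional problem for the surface average $\phi(r) := \smean_r^y(h)$, identify the radial ODE it satisfies, and evaluate the resulting universal constant by testing against one explicit solution. The case $N=1$ can be dispatched directly: every solution of $ah''=h$ is of the form $h(x) = A\cosh((x-y)/\sqrt{a}) + B\sinh((x-y)/\sqrt{a})$, so the surface mean $(h(y+\eta)+h(y-\eta))/2$ equals $A\cosh(\eta/\sqrt{a})$ while $h(y)=A$, which gives the claim.

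For $N\ge 2$, I would define $\phi(r) := \smean_r^y(h)$ on $[0,\eta]$, noting $\phi(0)=h(y)$ and $\phi\in C^2((0,\eta))$ by differentiation under the integral sign. Writing $\phi'(r) = \frac{1}{|\partial B_1(0)|}\int_{\partial B_1(0)} \nabla h(y+r\omega)\cdot \omega\, \d\omega$, applying the divergence theorem together with the equation $a\Delta h = h$, and invoking Lemma \ref{L:means}, I obtain
\begin{equation*}
r^{N-1}\phi'(r) = \frac{1}{a}\int_0^r s^{N-1}\phi(s)\,\d s,
\end{equation*}
which upon differentiation gives the modified Bessel-type equation $\phi''(r) + \frac{N-1}{r}\phi'(r) = \phi(r)/a$ on $(0,\eta)$. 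Since $\phi$ is continuous (hence bounded) at the origin, a standard Frobenius analysis---the second independent solution behaves like $r^{2-N}$ for $N\ge 3$ and like $\log r$ for $N=2$, hence is unbounded---forces $\phi$ to be a scalar multiple of the unique normalized regular solution $\psi$ satisfying $\psi(0)=1$. Consequently $\phi(r) = h(y)\,\psi(r)$, and in particular $h(y) = \smean_\eta^y(h)/\psi(\eta)$.

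To identify $\psi(\eta)$ explicitly, I would substitute into this identity the convenient solution $h_0(x) := \cosh(\langle \xi, x-y\rangle/\sqrt{a})$ with $\xi$ an arbitrary unit vector; it satisfies $a\Delta h_0 = h_0$ trivially and $h_0(y)=1$. Parametrizing $\partial B_1(0)$ by $\omega\cdot \xi = \cos\theta$ so that $\d\omega = \sin^{N-2}\theta\,\d\theta\,\d\omega'$ with $\omega' \in S^{N-2}$, the orthogonal integration collapses and yields
\begin{equation*}
\smean_\eta^y(h_0) = \frac{\int_0^\pi \cosh(\eta\cos\theta/\sqrt{a})\sin^{N-2}\theta\,\d\theta}{\int_0^\pi \sin^{N-2}\theta\,\d\theta},
\end{equation*}
which must equal $\psi(\eta)$. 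Substituting back produces the stated formula. The only mildly delicate point is the Frobenius uniqueness step pinning down the regular solution at the singular origin; the remaining ingredients are just standard computations of spherical averages.
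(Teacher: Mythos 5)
Your proof is correct, but it takes a genuinely different route at the decisive step. Both you and the paper derive the same radial ODE $\phi''+\frac{N-1}{r}\phi'=\frac{1}{a}\phi$ for $\phi(r)=\smean_r^y(h)$ (the paper does this by applying $\Delta_y$ to the identity of Lemma~\ref{L:means}, you by combining the divergence theorem with Lemma~\ref{L:means}). The paper then substitutes to transform this into the modified Bessel equation, picks off the singular branch $K_\nu$ via small-$r$ asymptotics to get $\phi$ proportional to $I_\nu$, and finally invokes the Poisson-type integral representation of $I_\nu$ to land on the stated trigonometric formula. You instead argue abstractly that the one-dimensional space of solutions bounded at $r=0$ pins down $\phi$ up to the scalar $h(y)$, and then evaluate the universal profile $\psi$ not through special-function theory but by plugging in the explicit ``plane wave'' solution $h_0(x)=\cosh(\langle\xi,x-y\rangle/\sqrt{a})$ and computing its spherical average in polar coordinates aligned with $\xi$. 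Your version is more elementary and self-contained, since it never names a Bessel function and needs no asymptotic expansions or integral representations from the literature; the paper's version has the advantage of explicitly exhibiting the connection to $I_\nu$ (useful for related mean-value formulas and for citing \cite{Kuz}). The Frobenius step you invoke is sound: the indicial roots of the ODE at $r=0$ are $0$ and $2-N$, the non-regular branch goes like $r^{2-N}$ for $N\ge3$ and like $\log r$ for $N=2$, both unbounded, so boundedness of $\phi$ (which follows from continuity of $h$ at $y$) does select the regular solution uniquely.
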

\begin{proof} The case $N=1$ follows by directly solving the equation. The case $N\ge2$ is exactly the second formula in \cite[(2.5)]{Kuz}. For the reader's convenience, we give a proof in Appendix \ref{app:mean}. 
\end{proof}

\subsection{Preparation: positivity and uniform bound for solutions.} \label{S:positivity}

The solution $u_a$ of \eqref{eq:sd} satisfies
\begin{align}\label{eq:minmax}
0<u_a(x)\le M:=\max\left\{\sup_{x\in \Omega} f(x),  \sup_{x\in \partial\Omega} g(x)\right\}\quad \text{ for all } x\in \Omega.
\end{align}
Indeed, let $v\in C^2(\Omega)\cap C(\overline{\Omega})$ be a classical solution to the following equation\/{\rm :}
\begin{equation*}%\label{eq:comp}
\begin{cases}
a\Delta v = v &\text{ in } \Omega,\\
v=g&\text{ on } \partial \Omega.
\end{cases}
\end{equation*}
The existence of $v$ is guaranteed by \cite[Theorem 6.13]{GT}. 
Setting $U=v-u_a$, we see that $U\in H^1_0(\Omega)$ satisfies
\begin{equation*}
\begin{cases}
-a\Delta U+ U=-f%\chi_{\Omega_i}
&\text{ in } \Omega,\\
U=0&\text{ on } \partial \Omega.
\end{cases}
\end{equation*}
Then testing it by $U_+:=\max\{U,0\}\in H^1_0(\Omega)$, one can derive that
\begin{align*}
    \int_{\Omega}|U_+(x)|^2\, \d x
    &= \int_{\Omega}U(x)U_+(x)\, \d x \displaybreak[0]
\\
    &\le
    \int_{\Omega}U(x)U_+(x)\, \d x + \int_{\Omega} a\nabla U(x)\cdot \nabla U_+(x)\, \d x \displaybreak[0]
\\
    &=
    -\int_\Omega f(x)
    %\chi_{\Omega_i}(x)
    U_+(x)\, \d x\le 0, 
\end{align*}
which yields $v\le u_a$ in $\Omega$. By the  weak maximum principle (see e.g.\ \cite[Corollary 3.2]{GT}), $v$ is nonnegative and so $0\le v\le u_a$ in $\Omega$.  %, and hence, the maximum principle for weak solutions (see, e.g.,~ \cite[Theorem 9.27]{B11}) ensures \eqref{eq:minmax}. 
Moreover, \cite[Theorem 9.6]{GT} applied to $-u_a$ implies that either $u_a > 0$ on $\Omega$ or $u_a$ is constant on $\Omega$, the latter of which never happens since $f$ is nonzero. Therefore, $u_a >0$ on $\Omega$. The positivity guarantees that we can consider the function $\log u_a$. 
%Another proof for the boundedness. 
 
For the boundedness from above,  let $u_a^\star := M-u_a$. Then 
\begin{equation*}
\begin{cases}
-a\Delta u_a^\star+u_a^\star = M-f &\text{ in } \Omega,\\
u_a^\star=M-g&\text{ on } \partial \Omega. 
\end{cases}
\end{equation*} Since $M-f \ge0$ on $\Omega$ and $M-g \ge0$ on $\partial\Omega$, the previous arguments show $u_a^\star \ge0$, which is exactly the upper bound of \eqref{eq:minmax}.

%The following arguments work for $0<a<2$. 

\subsection{Proof of  Theorem \ref{T:main}} \label{S:lower}

Our general strategy is to split 
the proof of Theorem \ref{T:main} into two parts: there exist $m_a \in \R$, $a >0$, such that  
\begin{align}\label{claim1}
\inf_{x\in \overline{\AA}}[-\sqrt{a}\log u_a(x)- d(x,\partial \AA)]
\ge  m_a \qquad \text{and} \qquad \liminf_{a\to0^+} m_a \ge 0, 
\end{align}
and there exist $M_a \in \R$, $a >0$, such that 
\begin{align}\label{claim2}
\sup_{x\in \overline{\AA}}[-\sqrt{a}\log u_a(x)-d(x,\partial \AA)]
\le  M_a \qquad \text{and} \qquad \limsup_{a\to0^+} M_a \le 0.  
\end{align}

\subsubsection{Proof of lower bound}
We begin with the lower limit \eqref{claim1}, which is much easier. 
Let us consider the following equation\/{\rm :}
\begin{equation}\label{eq:dens}
\begin{cases}
a \Delta \hat{u}_a = \hat{u}_a &\text{ in } \AA,\\
\hat{u}_a=u_a&\text{ on } \partial \AA.
\end{cases}
\end{equation}
%where $h=u_a|_{\partial{\AA}}\in C(\partial \AA)$. 
By \cite[Theorem 6.13]{GT}, Equation \eqref{eq:dens} has a unique classical solution $\hat{u}_a \in C^2(\AA)\cap C(\overline{\AA})$. In addition, \cite[Theorem 9.5]{GT} implies $u_a = \hat{u}_a$ on $\overline{\AA}$.  

We take an arbitrary $y\in \AA$ and fix it. Let $\e:=d(y,\partial \AA)$ and 
let $w=w_y\in C^2(B_\e(y))\cap C(\overline{B_\e(y)})$ be a unique classical solution to 
\begin{equation*}%\label{eq:in}
\begin{cases}
a\Delta w = w &\text{ in } B_\e(y),\\
w=M&\text{ on } \partial B_\e(y),
\end{cases}
\end{equation*}
where $M$ is the uniform upper bound of $u_a$ defined in \eqref{eq:minmax}.  
Since $u_a\le w$ on $\partial B_\e(y)$, we see by the comparison principle that $u_a\le w$ on $B_\e(y)$.
We can deduce from Lemma \ref{P:mean-value} that
\begin{align*}
u_a(y)\le w(y)
%&\le \pi M \left(\int_a^{\cos^{-1}(1-a)} \frac{\exp(\e\cos\theta/\sqrt{a})+\exp(-\e\cos\theta/\sqrt{a})}{2}\sin^{N-2}\theta\, \d\theta \right)^{-1}\\
&\le \pi  M  \left(\int_a^{\arccos(1-a)} \frac{\exp((\e\cos\theta)/\sqrt{a})}{2}\sin^{N-2}\theta\, \d\theta \right)^{-1}\\
&\le \pi  M  \left(\frac{\exp(\e(1-a)/\sqrt{a})}{2}\right)^{-1}
\left(\int_a^{\arccos(1-a)} \sin^{N-2}\theta\, \d\theta \right)^{-1}\\
&\le  
\frac{2\pi  M  \exp(-\e(1-a)/\sqrt{a})}{[\arccos(1-a)-a]\sin^{N-2}a} \quad \text{ if $N\ge 2$}
\end{align*}
and 
\[ 
u_a(y)\le
w(y)= \frac{2 M}{\exp(\e/\sqrt{a})+\exp(-\e/\sqrt{a})}\le 2 M e^{-\frac{\e}{\sqrt{a}}}\ \text{ if $N=1$}.
\] 
 Therefore, we have
\begin{align*}
\sqrt{a}\log u_a(y)\
\le 
\begin{cases}
-d(y,\partial \AA)(1-a)+\sqrt{a}\log C_{N,a}M
\quad &\text{ if }\ N\ge 2,\\
-d(y,\partial \AA
)+\sqrt{a}\log 2M&\text{ if }\ N=1, 
\end{cases}
\end{align*}
where $C_{N,a}:=2\pi/[(\arccos(1-a)-a) \sin^{N-2}a] \asymp (1/a)^{N-\frac{3}{2}}$  as $a\to 0^+$  %such that $\sqrt{a}\log \tilde{C}_a\to 0^+$ as $a\to 0^+$.
by noting that $\arccos(1-a) \sim \sqrt{2a}$, $ a\to0^+$.
Thus we obtain  
\begin{align} 
-\sqrt{a}\log u_a(y)-d(y,\partial \AA)
\ge 
\begin{cases}
-a\,{\rm diam}(\AA)-\sqrt{a}\log C_{N,a} M & 
\text{ if }\ N\ge 2,\\
-\sqrt{a}\log 2M&\text{ if }\ N=1. 
\end{cases}   
\label{eq:lower_bound_ua}
\end{align}
Since this lower bound is uniform over $y\in \AA$ and  $\sqrt{a}\log u_a$ is continuous on $\overline{\AA}$, we obtain \eqref{claim1} as desired. 
%, which along with the continuity of $\sqrt{a}\log u_a$ yields \eqref{claim1}.

\begin{rmk}
 Because of the constant boundary value, $w$ has the following explicit formula in $B_\epsilon(y)$: 
\begin{align*}
w(x)=
\begin{cases}
M \frac{\displaystyle\int_0^\pi \cosh((|x-y|\cos \theta)/\sqrt{a})\sin^{N-2}\theta\, \d \theta}{\displaystyle\int_0^\pi \cosh((\e\cos \theta)/\sqrt{a})\sin^{N-2}\theta\, \d \theta}
&\text{ for }\ N\ge 2, 
\vspace{3mm}\\
\displaystyle
M\frac{ \cosh(|x-y|/\sqrt{a})}{ \cosh(\e/\sqrt{a})}
&\text{ for }\ N=1. 
\end{cases}
\end{align*}
For our purpose,  the value of $w$ at the center $y$, i.e., Lemma \ref{P:mean-value}, was sufficient as shown above. 
\end{rmk}

\subsubsection{Proof of upper bound}\label{S:claim2} 
We next establish the upper bound \eqref{claim2}, where 
the quantity
\[
\beta_a :=\inf_{y\in\partial \AA}u_a(y) >0  
\]
turns out to be of great help.

\vspace{1mm}
\noindent
\textbf{Step 1.} We establish a bound for $u_a$ of the form
\begin{align}
&-\sqrt{a}\log u_a(y)-d(y,\partial \AA)
\le  \label{eq:upper_bound_ua}  \\
&\qquad\qquad  -\sqrt{a}\log \beta_a   
+
\begin{cases}
a\,{\rm diam}(\AA)+\sqrt{a}(1+a)+\sqrt{a}\log \tilde{C}_{N,a} 
&\text{ if }\ N\ge 2,\\
\sqrt{a}&\text{ if }\ N=1  
\end{cases} 
\notag 
\end{align}
for all $y \in \overline{\AA}$ and $a>0$, where $\tilde{C}_{N,a}$ is a positive constant independent of $y$ and such that $\tilde{C}_{N,a} \asymp (1/a)^{N-\frac{3}{2}}$, $a\to 0^+$.   

\begin{figure}[b]
        \centering
        \includegraphics[keepaspectratio, scale=0.14]{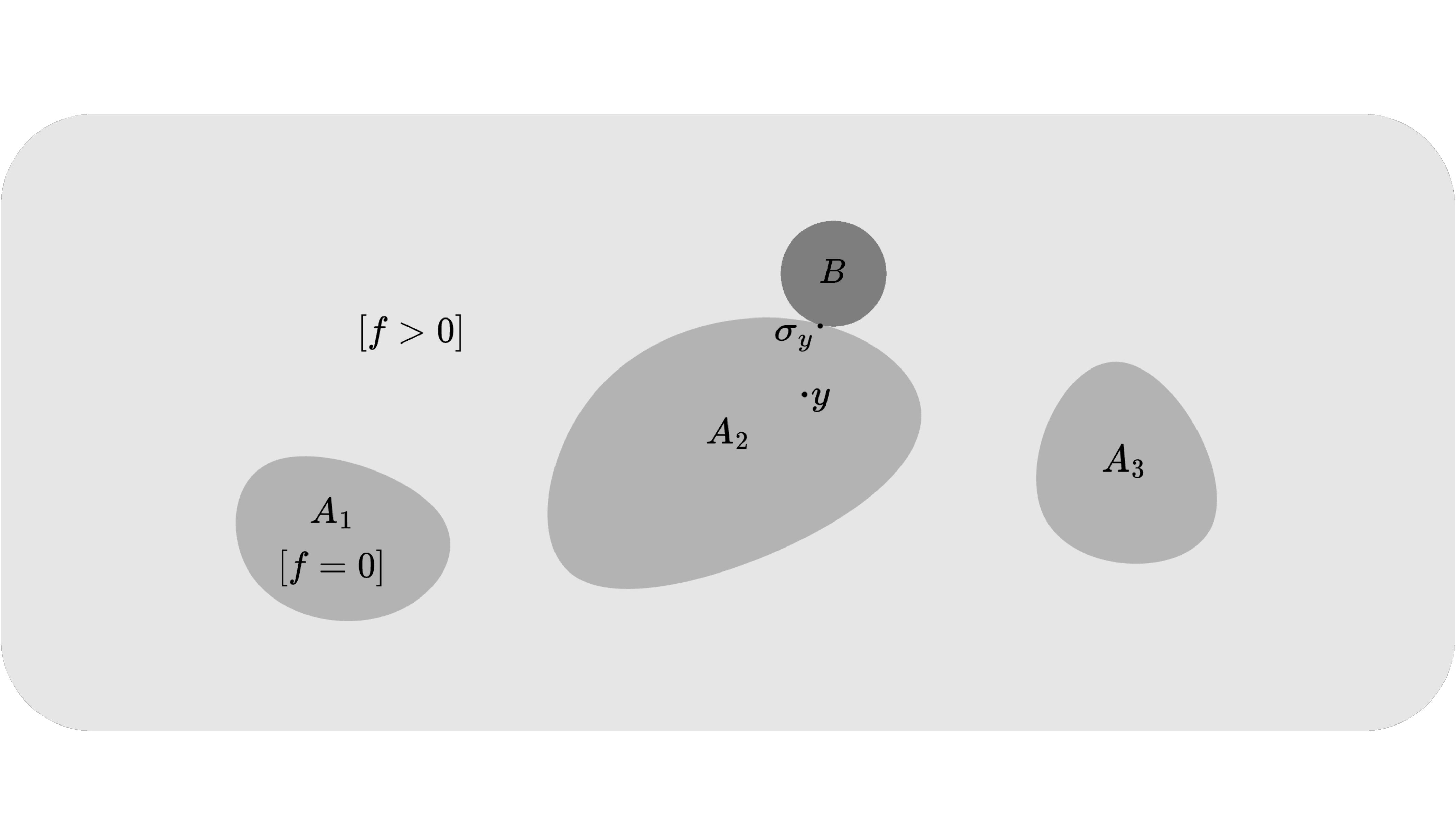}
   \caption{A typical configuration of $\Omega$, $\AA =\AA_1 \cup \AA_2 \cup \AA_3$ and $B=B_{\sqrt{a}}(\tilde{y}).$ }
\label{fig:concept2}
\end{figure}

Towards a proof, we take an arbitrary $y \in \AA$ and fix it. Let $\sigma = \sigma_{y}\in \partial \AA$ be a point such that $|y-\sigma| =d(y, \partial \AA)$. For each $0<a \le \e_0^2$, there exists a ball $B_{\sqrt{a}}(\tilde{y}) \subseteq \Omega\setminus \AA$ tangent to $\partial \AA$ at $\sigma$ according to Assumption \ref{item:C2'}, see Figure \ref{fig:concept2}. (The point $\tilde y$ will depend on $a$ and $y$.) Note then that 
\begin{align}\label{eq:triangle_ineq}
    |y-\tilde{y}|\le d(y,\partial \AA)+\sqrt{a}.      
\end{align}
%For any $x\in { \AA}%\Omega_j$, let $\sigma=\sigma_x\in\partial{ \AA}%\Omega_j$ be such that $d(x,\partial{ \AA}%\Omega_j)=|x-\sigma|$ and let $\e=\e_{x}>0$ and $y\in \R^N\setminus \AA$ be such that $\partial{ \AA}%\Omega_j\cap\partial B_\e(y)=\{\sigma\}$. %and $\e\le a$. %and $|x-y|=|x-\sigma|+\e$.
Let $z$ be the unique classical solution to 
\begin{equation*}%\label{eq:out}
\begin{cases}
a\Delta z = z &\hspace{-15mm}\text{ in } \R^N\setminus B_{\sqrt{a}}(\tilde{y}),\\
z=\beta_a&\hspace{-15mm}\text{ on } \partial B_{\sqrt{a}}(\tilde{y}),\\
\displaystyle \lim_{|x|\to +{\infty}}z(x)=0. &
\end{cases}
\end{equation*}
Recall that the solution $z$ can be written as
\begin{align} \label{eq:zineeq}
z(x)=
\begin{cases}
\frac{\displaystyle
\beta_a\int_0^\infty \exp(- (|x-\tilde{y}|\cosh t)/\sqrt{a})\sinh^{N-2}t\, \d t}{\displaystyle
\int_0^\infty \exp(- \cosh t)\sinh^{N-2}t\, \d t}
&\text{ for }\ N\ge 2, 
\vspace{3mm}\\
\displaystyle \beta_a\exp(-|x-\tilde{y}|/\sqrt{a})
&\text{ for }\ N=1. 
\end{cases}
\end{align}
Since $u_a \ge \beta_a\ge z$ on $\partial \AA$ by \eqref{eq:zineeq} (recall from arguments around \eqref{eq:dens} that $u_a|_{\overline{\AA}} \in C^2(\AA)\cap C(\overline{\AA})$ is a classical solution to \eqref{eq:dens}), the comparison principle \cite[Theorem 3.3]{GT} ensures that $u_a \ge z$ in $ \AA$.  
Noting that $\acosh(1+a) \ge  \asinh(a)$ for all $a>0$, we infer from \eqref{eq:triangle_ineq} and \eqref{eq:zineeq} that, if $N\ge2$,  
\begin{align*}
u_a(y)
&\ge z(y)\\
&\ge
 \frac{\beta_a}{c_N} 
\int_{\asinh(a)}^{\acosh(1+a)}\exp(-(|y-\tilde{y}|\cosh t)/\sqrt{a}\,) \sinh^{N-2}t\, \d t\\
&\ge
\frac{\beta_a}{c_N} a^{N-2}[\acosh(1+a)-\asinh(a)] 
 \exp\left(-\frac{\big[d(y,\partial \AA) +\sqrt{a}\,\,\big](1+a)}{\sqrt{a}}\right),
\end{align*}
where $c_N>0$ is the denominator of $z$ in \eqref{eq:zineeq}, and if $N=1$, 
\[ 
u_a(y)\ge z(y)
\ge \beta_a
\exp\left(-\frac{\big[d(y,\partial \AA) +\sqrt{a}\,\,\big]}{\sqrt{a}}\right).
\]
Passing to the logarithm yields 
\begin{align*}
\sqrt{a}\log u_a(y)
\ge 
\sqrt{a}\log \beta_a  
+
\begin{cases}
-[d(y,\partial { \AA}
)+\sqrt{a}\,](1+a) - \sqrt{a}\log \tilde{C}_{N,a}\quad &\text{ if } N\ge 2,\\
-\left[d(y,\partial { \AA})
+\sqrt{a}\,\,\right] \quad &\text{ if } N=1, 
\end{cases}
\nonumber
\end{align*}
where $\tilde{C}_{N,a}:= c_N/[a^{N-2}(\acosh(1+a)-\asinh(a))]$, and hence the desired inequality \eqref{eq:upper_bound_ua}. The asymptotic behavior $\tilde{C}_{N,a} \asymp (1/a)^{N-\frac{3}{2}}$, $a\to 0^+$, holds since $\acosh(1+a) \sim \sqrt{2a}$ and $ \asinh(a)\sim a$ as $a\to0^+$. Note that, since this is uniform on $y\in \AA$, it also holds on $\overline{\AA}$ by the continuity of $\sqrt{a}\log u_a$. 

In view of \eqref{eq:upper_bound_ua}, it remains to prove that 
\begin{equation}\label{eq:ca}
\liminf_{a\to 0^+}\sqrt{a}\log \beta_a\ge 0,
\end{equation}
which is achieved in Steps 2 and 3 below. Note that this inequality is equivalent to  $\lim_{a\to0^+}\sqrt{a}\log\beta_a=0$ because $\beta_a$ is uniformly bounded from above, cf.~\eqref{eq:minmax}.

\vspace{1mm}
\noindent
\textbf{Step 2.} For each $a>0$, we choose a point $y_a \in \partial \AA$ such that $u_a(y_a)=\beta_a$, which exists by the continuity of $u_a$. 
We prove that, for any $0<\epsilon<d(\partial \AA, \partial \Omega):= \inf\{|x-y|:(x,y)\in\partial A \times \partial \Omega\}$ and $a>0$,  
\begin{align} 
\sqrt{a}\log \beta_a
\ge
-\epsilon+
\sqrt{a}\log \vmean_{\epsilon}^{y_a}({u_a}).  \label{eq:logC_a}
\end{align}

For each  $0<\eta<  d(\partial \AA, \partial \Omega)$, let $u_a^\eta$ be a unique classical solution to
\begin{equation*}
\begin{cases}
a\Delta u_a^\eta = u_a^\eta &\text{ in } B_\eta(y_a),\\
u_a^\eta={u}_a&\text{ on } \partial B_\eta(y_a).
\end{cases}
\end{equation*}
%For simplicity, the superscripts $y$ in the surface mean $\smean_\eta^y$ and volume mean $\vmean_\eta^{y}$ are dropped below. 
We see by Lemma \ref{P:mean-value} that  
\begin{align}\label{eq:uaeta}
u_a^\eta(y_a) \ge \frac{\smean_\eta^{y_a}({u}_a)}{ \cosh(\eta/\sqrt{a})} 
\end{align}
in any dimensions $N$. 
On the other hand, ${u}_a\ge u_a^\eta$ holds in $B_\eta(y_a)$ by the same argument as in the proof of $ u_a \ge v$ in $\Omega$ (see Subsection \ref{S:positivity}), which then yields  
\begin{align}
    u_a(y_a) 
&= 
\frac{N}{\epsilon^N}\int_0^\epsilon \eta^{N-1} u_a(y_a)\, \d\eta  \notag \\
&\ge    
\frac{N}{\epsilon^N}\int_0^\epsilon \eta^{N-1}u_a^\eta (y_a)\, \d\eta
\quad \text{ for }\quad 0<\epsilon< d(\partial \AA, \partial \Omega).  
\label{eq:keyineq}
\end{align}
Combining  \eqref{eq:keyineq} with \eqref{eq:uaeta} and Lemma \ref{L:means}, we infer that, for any $0<\epsilon< d(\partial \AA, \partial \Omega)$,
\begin{align*}
u_a(y_a) 
&\ge 
\frac{N}{\epsilon^N\cosh(\epsilon/\sqrt{a})}\int_0^\epsilon \eta^{N-1}\smean_\eta^{y_a}(u_a) \, \d\eta  =
\frac{\vmean_\epsilon^{y_a}(u_a)}{\cosh(\epsilon/\sqrt{a})} \ge
e^{-\frac{\epsilon}{\sqrt{a}}} \vmean_\epsilon^{y_a}(u_a), 
\end{align*}
so that  \eqref{eq:logC_a} holds.

\vspace{1mm}
\noindent
\textbf{Step 3.} 
We show that, for any $0<\epsilon<d(\partial \AA, \partial \Omega)$, 
\begin{equation}\label{eq:Mconv}
    \lim_{a\to 0^+}\sup_{y \in \partial \AA}|\vmean_{\epsilon}^y({u}_a)-
    \vmean_{\epsilon}^y({f})|=0.
\end{equation}
Before going to the proof, we observe that  \eqref{eq:logC_a} and \eqref{eq:Mconv} imply \eqref{eq:ca}.  Indeed, by Assumption \ref{item:open},   $\vmean_\epsilon^y(f)>0$ for all $y \in \partial \AA$. Since  $y\mapsto \vmean_\epsilon^y(f)$ is continuous on $\partial \AA$, the number  $\alpha:=\inf_{y\in\partial \AA}\vmean_\epsilon^y(f)$ is positive. 
 From \eqref{eq:Mconv}, $|\vmean_{\epsilon}^{y_a}({u_a}) -\vmean_{\epsilon}^{y_a}(f)| <\alpha/2$ for sufficiently small $a>0$ and hence $\vmean_{\epsilon}^{y_a}({u_a}) \ge \alpha/2$. 
 This yields $\lim_{a\to0^+}\sqrt{a}\log \vmean_{\epsilon}^{y_a}({u_a}) =0$, and hence, by \eqref{eq:logC_a}, $\liminf_{a\to0^+}\sqrt{a}\log \beta_a \ge -\epsilon$. Since $0< \epsilon < d(\AA,\partial \Omega)$ was arbitrary, this completes the proof of \eqref{eq:ca} and hence of Theorem \ref{T:main}. 

Equation \eqref{eq:Mconv} can be proven as follows. 
To begin with, testing the weak form \eqref{eq:sd_weak}  with $\varphi:= {u}_a-\tilde{g}\in H^1_0(\Omega)$, we observe that  
\begin{align}
&
\int_\Omega a\nabla {u}_a(x)\cdot \nabla ({u}_a (x)-\tilde{g}(x))\, \d x
+
\int_\Omega {u}_a(x)({u}_a (x)- \tilde{g} (x))\, \d x \label{eq:energy1}  \\
&\qquad =
\int_\Omega 
f(x)({u}_a(x)-\tilde{g} (x))\, \d x  \notag   \le
\frac{1}{2}\|{f}\|_{L^2(\Omega)}^2+\frac{1}{2}\|{u}_a\|_{L^2(\Omega)}^2
+
\|{f}\|_{L^2(\Omega)}\|\tilde{g}\|_{L^2(\Omega)}.  \notag 
\end{align}
On the other hand,  
\begin{align*}
&\text{the left hand side of \eqref{eq:energy1}} \\
&\qquad \ge 
\frac{1}{2}\|\sqrt{a}\nabla {u}_a\|_{L^2(\Omega)}^2
-
\frac{1}{2}\|\sqrt{a}\nabla \tilde{g} \|_{L^2(\Omega)}^2
+
\|{u}_a\|_{L^2(\Omega)}^2
-
\left(
\frac{1}{2\Lambda}\|{u}_a\|_{L^2(\Omega)}^2+\frac{\Lambda}{2}\|\tilde{g}\|_{L^2(\Omega)}^2
\right)
\end{align*}
for any $\Lambda>0$. By setting $\Lambda=2$, the global energy bound 
\begin{equation}\label{eq:energy}
\frac{1}{2}\|\sqrt{a}\nabla {u}_a\|_{L^2(\Omega)}^2+\frac{1}{4}\|{u}_a\|_{L^2(\Omega)}^2\le C
\end{equation}
holds for some $C>0$ independent of $0<a<1$.  

We fix $y \in \partial \AA$ and a number $\delta$ such that $0< \delta < \epsilon$. 
In the weak form \eqref{eq:sd_weak}, we select $\varphi_{\epsilon,\delta}^y := \chi_{\epsilon}^y \ast \xi_\delta$, where $ \chi_{\epsilon}^y$ is the indicator function of the ball $B_{\epsilon}(y)$, $\xi_\delta(x) := \delta^{-N} \xi(x/\delta)$,  and $\xi\colon \R^N\to [0,\infty)$ is a  $C^1(\R^N)$ function supported on the closed unit ball $\overline{B_1(0)}$ such that $\int_{\R^N} \xi =1$.

By the weak form
\begin{equation*}%\label{eq:weak}
\int_\Omega u_a \varphi_{\epsilon,\delta}^y  = \int_\Omega f \varphi_{\epsilon,\delta}^y  - a\int_\Omega \nabla u_a \cdot \nabla \varphi_{\epsilon,\delta}^y, 
\end{equation*}
we can get the following inequality: 
\begin{align}
& \epsilon^N |B_1(0)| |\vmean_{\epsilon}^y(u_a) - \vmean_\epsilon^y(f) | 
=  \left| \int_\Omega u_a \chi_{\epsilon}^y  - \int_\Omega f \chi_{\epsilon}^y \right|    \label{eq:mean_u_a_f}\\
&\qquad \le \left| \int_\Omega u_a \chi_{\epsilon}^y   -  \int_\Omega u_a \varphi_{\epsilon,\delta}^y  \right| 
+  \left| \int_\Omega f \varphi_{\epsilon,\delta}^y  - a\int_\Omega \nabla u_a \cdot \nabla \varphi_{\epsilon,\delta}^y - \int_\Omega f \chi_{\epsilon}^y \right|   \notag \\
&\qquad \le  \underbrace{\left| \int_\Omega u_a \chi_{\epsilon}^y   -  \int_\Omega u_a \varphi_{\epsilon,\delta}^y  \right|}_{=:I_1}     
+  \underbrace{\left| \int_\Omega f \varphi_{\epsilon,\delta}^y-\int_\Omega f \chi_{\epsilon}^y   \right| }_{=:I_2}
+ \underbrace{\left|a\int_\Omega \nabla u_a \cdot \nabla \varphi_{\epsilon,\delta}^y \right|}_{=:I_3}.    \notag
\end{align}
We estimate the last three terms below. To lighten the notation, we drop the superscripts $y$ and use the same symbol $C$ for possibly different positive constants that are independent of $y \in \partial \AA$.

Estimates on $I_1$ and $I_2$ follow from that of $\|\varphi_{\epsilon,\delta} -\chi_\epsilon\|_2$. 
Here and henceforth, $\|\cdot\|_p$ denotes $\|\cdot\|_{L^{p}(\Omega)}$, when no confusion can arise. 
In the expression
\[
\varphi_{\epsilon,\delta}(x) - \chi_\epsilon(x) =\int_{|z|<\delta} [\chi_\epsilon(x-z) - \chi_\epsilon(x)] \xi_\delta(z)\, \d z, 
\]
we observe that 
\[
\sup_{|z|<\delta}|\chi_\epsilon(x-z) - \chi_\epsilon(x)| \le 
\begin{cases} 0,  & |x-y| < \epsilon -\delta \quad \text{or} \quad    |x-y|>\epsilon+\delta, \\
 1,&  \epsilon -\delta \le |x-y| \le \epsilon+\delta 
\end{cases}
\]
so that 
\begin{equation} \label{eq:approx_indicator}
\|\varphi_{\epsilon,\delta} - \chi_\epsilon\|_{2} \le  \sqrt{|B_1(0)|} \sqrt{(\epsilon+\delta)^N - (\epsilon-\delta)^N} \le C (\delta \epsilon^{N-1})^\frac1{2}.   
\end{equation}
By the Schwarz inequality and uniform boundedness of $u_a$, we have 
\begin{equation*}%\label{eq:varphi_chi}
I_1 \le  C\|\varphi_{\epsilon,\delta} - \chi_\epsilon\|_{2} \le C(\delta \epsilon^{N-1})^{\frac1{2}}
\end{equation*}
and similarly 
\begin{equation*}%\label{eq:varphi_chi2}
I_2\le   C(\delta \epsilon^{N-1})^{\frac1{2}}. 
\end{equation*}

For $I_3$, Young's inequality implies
\begin{equation}\label{eq:Young1} 
\|\nabla \varphi_{\epsilon,\delta}\|_2 = \|\chi_\epsilon \ast \nabla \xi_{\delta}\|_2 \le \|\chi_\epsilon\|_2 \|\nabla \xi_{\delta}\|_1 \le C\epsilon^{\frac{N}{2}} \delta^{-1}.   
\end{equation}
 Combining \eqref{eq:energy} and \eqref{eq:Young1} yields 
\begin{equation*} %\label{eq:nabla_varphi}
I_3 \le   \sqrt{a} \|\sqrt{a} \nabla u_a\|_2 \| \nabla \varphi_{\epsilon,\delta} \|_{2} \le C  \sqrt{a}\epsilon^{\frac{N}{2}} \delta^{-1} .
\end{equation*}

Altogether we deduce
\begin{equation*}
 |\vmean_{\epsilon}^y(u_a) - \vmean_\epsilon^y(f) | \le  \frac{I_1+I_2 + I_3}{|B_1(0)| \epsilon^N}\le  C \delta^{\frac1{2}}\epsilon^{-\frac{N+1}{2}} + C \sqrt{a} \delta^{-1} \epsilon^{-\frac{N}{2}}. 
\end{equation*}
Note that the constant $C$ is independent of $y \in \partial \AA$,  $0<a<1$ and $0<\delta < \epsilon <d(\partial \AA,\partial \Omega)$. 
Taking $\sup_{y \in \partial \AA}$,   $\limsup_{a\to0^+}$ and then $\lim_{\delta\to0^+}$ yields \eqref{eq:Mconv} as desired.

\section{Proof of Theorem \ref{T:rate}}\label{S:rate}
Rate of convergence can be obtained by careful estimates of each approximation in the proof of Theorem \ref{T:main}. The lower bound 
\begin{align*}%\label{eq:rate_lower}
\inf_{x\in \overline{\AA}} \left[-\sqrt{a} \log u_a(x) - d(x, \partial \AA)\right] \ge -C\sqrt{a} \log \frac1{a}     
\end{align*}  actually follows from \eqref{eq:lower_bound_ua}. This lower bound is better than $- C a^{\frac1{2}-\tau}$ and does not require condition  \eqref{eq:mean_f}.
It remains to prove the upper bound \begin{align}\label{eq:rate_upper}
\sup_{x\in \overline{\AA}} \left[-\sqrt{a} \log u_a(x) - d(x, \partial \AA)\right] \le  Ca^{\frac{1}{2}-\tau}.
\end{align}
One can see from arguments in Subsection \ref{S:claim2}, especially from Equations \eqref{eq:logC_a} and \eqref{eq:upper_bound_ua}, that the main problem is to find a good lower bound for $\sqrt{a}\log \beta_a$ as a function of $a$, which follows from a good lower bound for $\vmean_{\epsilon}^y(u_a)$ with suitably chosen function $\e=\e(a)$. 

\subsection{Preparation: local energy bounds}

The strategy is to give careful estimates on the integrals $I_1,I_2,I_3$ in \eqref{eq:mean_u_a_f}. 
The following local version of energy bound \eqref{eq:energy} plays a crucial role.   

\begin{lem}\label{lem:local_energy}   For any $a, \eta >0$ and $y \in \Omega$ such that $\overline{B_{3\eta}(y)} \subseteq \Omega$, the inequality  
 \begin{equation}\label{eq:ineq_eta}
a\|\nabla u_a  \|_{L^2(B_{\eta}(y))}^2  +  \| u_a\|_{L^2(B_{\eta}(y))}^2 \le \|f\|_{L^2(B_{3\eta}(y))}^2 + C_N  a \eta^{-2}  \|u_a\|_{L^2(B_{3\eta}(y))}^2 
 \end{equation}
 holds, where $C_N>0$ is a constant depending only on the dimension $N$. 
\end{lem}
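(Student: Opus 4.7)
This is a standard Caccioppoli-type local energy estimate. The plan is to test the weak form \eqref{eq:sd_weak} against $\varphi := \chi^2 u_a$, where $\chi$ is a smooth cutoff interpolating between the radii $\eta$ and $3\eta$ around $y$, and then absorb the resulting cross terms via Young's inequality.

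First I would fix $\chi \in C_c^\infty(\R^N)$ with $0 \le \chi \le 1$, $\chi \equiv 1$ on $B_\eta(y)$, $\mathrm{supp}\,\chi \subseteq B_{3\eta}(y)$, and $|\nabla \chi| \le c_N/\eta$ for a dimensional constant $c_N$; such a $\chi$ is constructed in the usual way, e.g.\ $\chi(x) = \psi(|x-y|/\eta)$ for a fixed $\psi \in C_c^\infty(\R)$ equal to $1$ on $[0,1]$ and vanishing on $[3,\infty)$. Because $\overline{B_{3\eta}(y)} \subseteq \Omega$, the function $\chi^2 u_a$ lies in $H^1_0(\Omega)$ and is a legitimate test function. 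Plugging it into \eqref{eq:sd_weak} and expanding $\nabla(\chi^2 u_a) = \chi^2 \nabla u_a + 2\chi u_a \nabla \chi$ yields
\[
a\int_\Omega \chi^2 |\nabla u_a|^2 \, \d x + 2a\int_\Omega \chi u_a \, \nabla\chi \cdot \nabla u_a \, \d x + \int_\Omega \chi^2 u_a^2 \, \d x = \int_\Omega f \chi^2 u_a \, \d x.
\]

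Next I would apply Young's inequality twice: for the cross term, $|2a\chi u_a \nabla\chi \cdot \nabla u_a| \le \tfrac{a}{2}\chi^2 |\nabla u_a|^2 + 2a |\nabla\chi|^2 u_a^2$, and for the right-hand side, $|f\chi^2 u_a| \le \tfrac{1}{2}\chi^2 f^2 + \tfrac{1}{2}\chi^2 u_a^2$. Substituting and rearranging gives
\[
\tfrac{a}{2}\int_\Omega \chi^2 |\nabla u_a|^2 \, \d x + \tfrac{1}{2}\int_\Omega \chi^2 u_a^2 \, \d x \le \tfrac{1}{2}\int_\Omega \chi^2 f^2 \, \d x + 2a\int_\Omega |\nabla\chi|^2 u_a^2 \, \d x.
\]
Using $\chi \equiv 1$ on $B_\eta(y)$ to bound the left-hand side from below by the quantities appearing in \eqref{eq:ineq_eta}, together with $\chi \le 1$, $\mathrm{supp}\,\chi \subseteq B_{3\eta}(y)$, and $|\nabla \chi|^2 \le c_N^2/\eta^2$ on the right, and finally multiplying through by $2$, produces the claimed inequality with $C_N := 4c_N^2$.

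There is no genuine obstacle: the only point requiring mild care is the choice of splitting constant in Young's inequality, which must be made so that the $\tfrac{a}{2}\chi^2 |\nabla u_a|^2$ term can be absorbed into the left-hand side. The construction of the cutoff and the rest of the manipulations are routine.
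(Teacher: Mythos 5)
Your proof is correct and follows essentially the same route as the paper's: test the weak form against $\phi^2 u_a$ with a cutoff $\phi$ that is $1$ on $B_\eta(y)$ and supported in $B_{3\eta}(y)$, expand the gradient, and absorb the cross term and the source term via Young's inequality with the same splitting constants ($\tfrac{a}{2}$ on the gradient, $\tfrac12$ on $u_a^2$). The only cosmetic difference is the construction of the cutoff — you use a radial dilation $\psi(|x-y|/\eta)$, while the paper mollifies the indicator of $B_{2\eta}(y)$; both give $|\nabla\phi|\lesssim\eta^{-1}$.
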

\begin{proof}
We take $\phi =\phi_\eta \in C_{\rm c}^1(\R^N)$ such that $0\le \phi \le 1$ on $\R^N$, $\text{supp}(\phi) \subseteq  \overline{B_{3 \eta}(y)}$ and $\phi=1$ on $B_{\eta}(y)$. Taking the test function $\varphi:=\phi^2 u_a$ in the weak form yields  
\begin{equation}\label{eq:energy2}
 a\int_\Omega \nabla u_a \cdot \nabla [\phi^2 u_a]    +  \int_\Omega  \phi^2u_a^2  = \int_\Omega  \phi^2f u_a. 
\end{equation}
We observe that   
\begin{equation*}
\text{the right hand side of \eqref{eq:energy2}} \le \frac1{2}\int_\Omega \phi^2 (f^2 + u_a^2) =   \frac1{2}\|\phi f\|_{2}^2 + \frac{1}{2}\|\phi u_a\|_2^2. 
\end{equation*}  
On the other hand, using the inequality $2\alpha \cdot \beta \ge  - \frac{|\alpha|^2}{2} -2 |\beta|^2~(\alpha,\beta \in \R^N)$, we have that 
\begin{align*}
\text{the left hand side of \eqref{eq:energy2}} 
&=  a\int_\Omega \phi^2    |\nabla u_a|^2 +    2a\int_\Omega (\phi  \nabla u_a) \cdot ( u_a \nabla \phi ) + \|\phi u_a\|_2^2 \\
&\ge a\| \phi \nabla u_a  \|_2^2 -  a \left( \frac{1}{2} \|\phi \nabla u_a\|_2^2 + 2 \| u_a\nabla \phi\|_2^2 \right) +  \|\phi u_a\|_2^2 \\
&=  \frac{a}{2}\| \phi \nabla u_a  \|_2^2 - 2 a  \|u_a \nabla \phi\|_2^2 +  \|\phi u_a\|_2^2, 
\end{align*}
so that 
\begin{equation}\label{eq:nabla_u}
 a\| \phi \nabla u_a  \|_2^2  +  \|\phi u_a\|_2^2  \le  \|\phi f\|_{2}^2 + 4 a  \|u_a \nabla \phi\|_2^2. 
 \end{equation}
 To estimate $\|\nabla \phi\|_\infty$,  we select $\phi := \chi_{2\eta} \ast \xi_\eta$, where $\chi_{2\eta}$ is the indicator function of the ball $B_{2\eta}(y)$, $\xi_\eta(x) := \eta^{-N} \xi(x/\eta)$,  and $\xi\colon \R^N\to [0,\infty)$ is a $C^1(\R^N)$ function supported on the closed unit ball $\overline{B_1(0)}$ such that $\int_{\R^N} \xi =1$. Then we obtain 
\begin{equation}\label{eq:nabla_phi}
 \| \nabla \phi  \|_\infty \le  \| \chi_{2\eta}\|_1 \|\nabla \xi_\eta\|_\infty  =    |B_1(0)| (2\eta)^{N}  \|\nabla \xi\|_{\infty}\eta^{-N-1}. 
 \end{equation}
Combining \eqref{eq:nabla_u} and \eqref{eq:nabla_phi} and properties of $\phi$ yields the desired inequality. 
\end{proof}

\subsection{Proof of the upper bound \eqref{eq:rate_upper} in Theorem \ref{T:rate}}  \label{S:upper_bound}

For simplicity, numerous positive constants appearing below are all denoted by $C$. These constants might depend on $N,\tau,\zeta$, but not on $a$ or $y$.  
The notation in Step 3 of Subsection \ref{S:claim2}  is inherited. We fix $y \in \partial \AA$.  Let $0 < \tau' < \tau <1/2$ be arbitrary and 
\begin{equation*}  %\label{eq}
\epsilon = a^{\frac1{2} -\tau} \qquad \text{and} \qquad \delta = a^{\frac1{2}-\tau'}. 
\end{equation*}
Note that $\epsilon> \delta$ for  $0<a<1$.

\vspace{1mm}
\noindent
\textbf{Estimating local energy.}
 In \eqref{eq:ineq_eta} note that condition   \eqref{eq:mean_f} implies $\|f\|_{L^2(B_{3\eta}(y))}^2 \le C\eta^{N+2\zeta}$. We specialize to the parameter $\eta =3^k\epsilon$, $k =0,1,2,\dots$, where $\epsilon = a^{\frac1{2}-\tau}$.  Then, neglecting the term $a\|\nabla u_a  \|_{L^2(B_{\eta}(y))}^2$,  we get 
 \begin{equation}\label{eq:ineq_rec}
  \| u_a\|_{L^2(B_{3^k \epsilon}(y))}^2 \le  C (3^{(N+2\zeta)k} \epsilon^{N+2\zeta} + 3^{-2k} a^{2\tau}  \|u_a\|_{L^2(B_{3^{k+1}\epsilon}(y))}^2).   \tag{Ineq${}_k$}
 \end{equation}
Recursively substituting (Ineq$_{k}$) into the right hand side of (Ineq$_{0}$) amounts to 
 \begin{equation*}
  \| u_a\|_{L^2(B_{\epsilon}(y))}^2 \le C (\epsilon^{N+2\zeta} + a^{2\tau n}  \|u_a\|_{L^2(B_{3^{n}\epsilon}(y))}^2) 
 \end{equation*}
 for all $0<a<1$ and $n\in\N$ such that $\overline{B_{3^n \epsilon}(y)} \subseteq \Omega$.  
Choosing $n$ to be the minimal integer such that $2 \tau n \ge (\frac1{2}-\tau)(N+2\zeta)$ and  recalling that $\|u_a\|_2$ is uniformly bounded, we get the bound 
 \begin{equation}\label{eq:ineq_ep1}
  \| u_a\|_{L^2(B_{\epsilon}(y))}^2 \le   C \epsilon^{N+2\zeta} 
 \end{equation}
as long as $\overline{B_{3^n \epsilon}(y)} \subseteq \Omega$.  
 Substituting  \eqref{eq:ineq_ep1} into the right hand side of \eqref{eq:ineq_eta} with $\eta=\epsilon=a^{\frac1{2}-\tau}$, we obtain a bound for the local energy
 \begin{equation}\label{eq:nabla_u4}
 \|\sqrt{a} \nabla u_a  \|_{L^2(B_{\epsilon}(y))}^2  +  \| u_a\|_{L^2(B_{\epsilon}(y))}^2  \le  C \epsilon^{N+2\zeta},  
 \end{equation}
 where $C>0$ is independent of $a>0$ and $y \in \partial \AA$.

\vspace{1mm}
\noindent
{\bf Estimating $I_1, I_2$ and $I_3$.}  Note that the integrands in $I_1,I_2,I_3$ are supported on the ball $B_{2\epsilon}(y)$ since the test function vanishes outside it.  
By the Schwarz inequality, \eqref{eq:approx_indicator} and the local energy estimate, we have 
\begin{equation}\label{eq:varphi_chi3}
I_1 \le  \|\varphi_{\epsilon,\delta} - \chi_\epsilon\|_{2} \|u_a\|_{L^2(B_{2\epsilon}(y))} \le C(\delta \epsilon^{N-1})^{\frac1{2}}  \epsilon^{\frac{N+2\zeta}{2}} =   C \epsilon^{N+\zeta} (\delta/\epsilon)^{\frac1{2}} 
\end{equation}
and similarly 
\begin{equation}\label{eq:varphi_chi4}
I_2\le    C \epsilon^{N+\zeta} (\delta/\epsilon)^{\frac1{2}}. 
\end{equation}

Concerning $I_3$, 
 combining \eqref{eq:nabla_u4} and \eqref{eq:Young1} yields 
\begin{equation} \label{eq:nabla_varphi}
I_3 \le   \sqrt{a} \|\sqrt{a} \nabla u_a\|_{L^2(B_{2\epsilon}(y))} \| \nabla \varphi_{\epsilon,\delta} \|_{2} \le C \epsilon^{N+\zeta} \sqrt{a} \delta^{-1} .
\end{equation}

\vspace{2mm}
\noindent
{\bf Lower bound of $\vmean_\epsilon^y(u_a)$.}
Combining \eqref{eq:mean_u_a_f},  \eqref{eq:varphi_chi3},  \eqref{eq:varphi_chi4} and \eqref{eq:nabla_varphi}  together we deduce
\begin{equation*}
 |\vmean_{\epsilon}^y(u_a) - \vmean_\epsilon^y(f) | \le  \frac{I_1+I_2 + I_3}{|B_1(0)| \epsilon^N}\le   Ca^{\min\{\frac{\tau -\tau'}{2}, \tau'\}} \epsilon^\zeta. 
\end{equation*}
Since condition  \eqref{eq:mean_f} exactly means $ \vmean_\epsilon^y(f) \asymp \epsilon^\zeta$, $\epsilon\to0^+$,  the right hand side is much smaller than $ \vmean_\epsilon^y(f)$, so that 
\begin{align*}
 \vmean_{\epsilon}^y(u_a) \ge c \vmean_\epsilon^y(f) \ge c' \epsilon^{\zeta}
\end{align*}
for some $c,c' >0$, which, together with \eqref{eq:logC_a}, further entails  
\[
\sqrt{a}\log \beta_a \ge  \sqrt{a}\log \vmean_\epsilon^y(u_a) -\epsilon \ge  - C a^{\frac1{2}-\tau}. 
\]
By this inequality and \eqref{eq:upper_bound_ua} we obtain \eqref{eq:rate_upper} as desired. 

\vspace{1mm}
\noindent
\textbf{The case $\zeta=0$.}
Although the proofs above are all valid for the case $\zeta=0$ too, some changes of proofs provide a better upper bound. The necessary changes are noted below. The parameters $\epsilon, \delta$ are replaced with 
\[
\epsilon = \sqrt{a} \log \frac{1}{a} \qquad \text{and} \qquad \delta=  \sqrt{a} \left(\log \frac{1}{a}\right)^{\frac1{2}}. 
\]
In \eqref{eq:nabla_u}, we put $\eta=\epsilon$,  use the uniform boundedness of $u_a$ and Young's inequality,  to proceed as 
\[
4 a  \|u_a \nabla \phi\|_2^2 \le Ca \|\nabla \phi\|_2^2 \le Ca  ( \|\nabla \xi_\eta\|_1 \|\chi_{2\eta}\|_2 )^2 \le C a \eta^{-2} \eta^N \le C \epsilon^N.  
\]
This yields the local energy estimate (without the tricky iterative substitution argument)
\begin{equation*}%\label{eq:nabla_u5}
 a\|\nabla u_a  \|_{L^2(B_{\epsilon}(y))}^2  +  \| u_a\|_{L^2(B_{\epsilon}(y))}^2  \le  C \epsilon^{N}. 
 \end{equation*}
 The remaining arguments are quite similar.

\subsection{Supplementary examples}

We present examples of source functions $f$ that satisfy/do not satisfy condition \eqref{eq:mean_f}.

\begin{ex} \label{ex:f1}
In addition to Assumptions \ref{item:Omega} -- \ref{item:boundary}, we assume that $\AA$ has a $C^1$-boundary. Let $f= c \chi_{\Omega \setminus \overline{\AA}}$ be the characteristic function multiplied by a constant $c>0$. We can see that 
\[
 \lim_{\epsilon\to0^+}\vmean_\epsilon^y(f^p) =\frac{c^p}{2},  \qquad p\in \{1,2\}, 
\]
uniformly on $y \in \partial \AA$. This 
shows that $f=c \chi_{\Omega \setminus \overline{\AA}}$ satisfies condition \eqref{eq:mean_f} with $\zeta=0$. 
\end{ex}

\begin{ex}\label{ex:f2} Let $\AA := B_1(0) \subset \R^N$ and $f (x) := [(|x|^2-1) \vee 0]^\zeta$ with $\zeta\ge0$ fixed.  For each $y \in \partial B_1(0)$ and $p\in\{1,2\}$ we have
\begin{align*}
\vmean_\epsilon^y (f^p) 
&= \frac{1}{|B_\epsilon(y)|} \int_{B_\epsilon(y)} f(x) \, \d x  
= \frac{1}{\epsilon^N |B_1(0)|} \int_{|x-y | < \epsilon,~ |x|>1} (|x|^2-1)^{p\zeta} \, \d x \\
&=  \frac{1}{\epsilon^N |B_1(0)|}  \int_{0}^\epsilon \, \d r  \int_{S_{r}} r^{N-1} (| y+ r \omega |^2-1)^{p\zeta}\, \d\sigma(\omega)\\
%&=   \frac{1}{\epsilon^N |B_1(0)|}  \int_{0}^\epsilon dr  \int_{S_{r,\epsilon}} r^{N-1} (2   r \omega \cdot y  +r^2)^{p\zeta}  d\omega\\
&=   \frac{1}{\epsilon^N |B_1(0)|}  \int_{0}^\epsilon  r^{N+p\zeta -1} \underbrace{ \left(  \int_{S_{r}}   (2  \omega \cdot y  +r)^{p\zeta} \, \d \sigma(\omega) \right)}_{=:\, F(r)} \d r, 
\end{align*}
where $S_{r}$ is the surface $\{\omega \in \partial B_1(0) : |r\omega+y|>1\}$ and $\d\sigma$ is the (unnormalized) surface measure.  Since the surface $S_{r}$ contains the hemisphere $S_+^y:=\{\omega \in \partial B_1(0): \omega\cdot y >0 \}$, we have for $0<r<\e<1$
\[
c_{p\zeta} := \int_{S_+^y} (2\omega \cdot y  )^{p\zeta} \, \d\sigma(\omega) \le  \int_{S_r} (2\omega \cdot y  )^{p\zeta} \, \d\sigma(\omega) \le F(r) \le 3^{p\zeta}|\partial B_1(0)|. 
\]
As $c_{p\zeta}$ is a positive constant independent of $y$ and $\e$, the inequality $\tilde c_{p\zeta} \epsilon^{p\zeta} \le \vmean_\epsilon^y (f^p) \le \tilde C_{p\zeta} \epsilon^{p\zeta}$ holds for some constants $0< \tilde c_{p\zeta} < \tilde C_{p\zeta}$ independent of $y \in \partial B_1(0)$ and $\epsilon\in(0,1)$. Thus condition \eqref{eq:mean_f} is fulfilled. 
\end{ex}

\begin{ex}\label{ex:f3} Let $\AA := B_1(0) \subset \R^N$, $q>0$ and 
\[
f (x) := 
\begin{cases}\exp[-(|x|^2-1)^{-q}], & |x|>1, \\
0, & |x| \le 1. 
\end{cases}
\]
Analogously to the calculations in Example \ref{ex:f2} we obtain 
for each $y \in \partial B_1(0)$ and $\e\in(0,1)$
\begin{align*}
\vmean_\epsilon^y (f) 
=   \frac{1}{\epsilon^N |B_1(0)|}  \int_{0}^\epsilon  r^{N -1} \underbrace{ \left(  \int_{S_{r}}  \exp\left(-\frac{1}{(2r\omega\cdot y + r^2)^q}\right)  \d \sigma(\omega) \right)}_{=:\, G(r)} \d r. 
\end{align*} 
The function $G$ can be estimated as 
\begin{align*}
G(r) &\le   \int_{\partial B_1(0)}  \exp\left(-\frac{1}{(3r)^q}\right)  \d \sigma(\omega) = |\partial B_1(0)|\exp\left(-\frac{1}{(3r)^q}\right)  \\
&\le  |\partial B_1(0)|\exp\left(-\frac{1}{2(3\e)^q}\right)\exp\left(-\frac{1}{2(3r)^q}\right)
\end{align*}
and so 
\[
\vmean_\epsilon^y (f) \le \frac{|\partial B_1(0)|}{|B_1(0)|} \varepsilon^{-N} \exp\left(-\frac{1}{2(3\e)^q}\right) \underbrace{\int_{1/\e}^\infty t^{-N-1}e^{-t^q/(2\cdot 3^q)}\,\d t.}_{=\,o(1),~\e\to0^+} 
\]
This inequality ensures that $\e^{-\zeta}\vmean_\e^y(f)\to0$ as $\e\to0^+$ for any fixed finite nonnegative exponent $\zeta$, so that $f$ does not satisfy condition  \eqref{eq:mean_f}. 
\end{ex}

\section{Proof of Theorem \ref{C:main}} \label{sec:signed}

Firstly, note that the desired inequality \eqref{eq:rate_signed_distance}  on $\overline{\AA}$ was already established in Theorem \ref{T:rate}. It remains to work on $\Omega^* \setminus \overline{\AA}$. 
It is easy to check that the functions $u_a^* := C^* - u_a$ and $f^* := C^* - f = C^* \chi_{\overline{\AA}}$ satisfy the PDE
\[
\begin{cases}
-a\Delta u_a^* + u_a^*= f^* & \quad \text{ on }\quad \Omega, \\
u_a^* = C^\ast - g &    \quad \text{ on }\quad \partial\Omega. 
\end{cases}
\]
Arguments very analogous to the proof of Theorems \ref{T:main} and \ref{T:rate} work for this PDE, which are outlined below.  One needs to pay attention to that $\AA^* := \Omega \setminus \text{supp}(f^*)$ does not satisfy Assumption \ref{item:away} and this is why we need to focus on the smaller domain $\Omega^*$.  
\begin{itemize}
\item It holds from \eqref{eq:minmax} that $0< u_a^* \le C^*$ on $\Omega$. 
\item Take a ball $B_\epsilon(y)$ with any $y \in \Omega^* \setminus \overline{\AA}$ and $\epsilon := d(y, \partial \AA)$ and a unique solution to the equation
 \begin{equation*}
\begin{cases}
a\Delta v_a^* =v_a^* &\text{ in } B_\e(y),\\
v_a^* =C^* &\text{ on } \partial B_\e(y).
\end{cases}
\end{equation*}
Using the explicit form of $v_a^*$ and the comparison $u_a^*(y) \le v_a^*(y)$, 
we can prove 
\begin{align*}
-\sqrt{a}\log u_a^*(y)- d(y,\partial \AA) 
&\ge 
\begin{cases}
-a\,{\rm diam}(\AA)-\sqrt{a}\log C_{N,a} C^* & 
\text{ if }\ N\ge 2,\\
-\sqrt{a}\log 2C^*&\text{ if }\ N=1 
\end{cases}   \\
&\ge - C \sqrt{a} \log\frac1{a} 
\end{align*}
for some $C>0$ depending only on $C^*, \text{diam}(\AA), N$. 
In the above arguments, it is important to take $y$ from $ \Omega^* \setminus \overline{\AA}$, not from $ \Omega \setminus \overline{\AA}$, because for $y\in \Omega \setminus \overline{\AA}$ the ball $B_\e(y)$ might not lie inside $\Omega$. 

\item Step 1 is very similar. For $y \in \Omega\setminus \overline{\AA}$ we can take $\sigma \in \partial \AA$ such that $|y-\sigma| = d (y, \partial \AA)$ and a ball $B_{\sqrt{a}}(\tilde y)$, now contained in $\AA$, and tangent to $\partial \AA$ at $\sigma$. Then we can prove 
\begin{align}
&\sup_{y \in \Omega \setminus \overline{\AA}}[-\sqrt{a}\log u_a^*(y)-d(y,\partial \AA)] 
\le   \notag \\
&\qquad  -\sqrt{a}\log \beta_a^*   
+
\begin{cases}
a\,{\rm diam}(\AA)+\sqrt{a}(1+a)+\sqrt{a}\log \tilde{C}_{N,a} 
&\text{ if }\ N\ge 2,\\
\sqrt{a}&\text{ if }\ N=1.  
\end{cases} 
\notag 
\end{align}
as desired. Note that here we do not need to work on the smaller domain $\Omega^*\setminus \overline{\AA}$.

\item The arguments in Step 2 and Step 3 are available with obvious changes: we can prove for any $y \in \Omega$ and $\eta>0$ with $B_\eta(y)\subseteq \Omega$ 
\[
\lim_{a\to0^+} \vmean_\eta^y(u_a^*) = \vmean_\eta^y(f^*)   ~~~\left( = \frac{C^*}{2} \right),  \quad \text{uniformly over $y \in \partial A$}, 
\]
and then prove 
\[
\liminf_{a\to0^+} \beta_a^* \ge0, 
\]
where $\beta_a^* := \inf_{y \in \partial \AA} u_a^*(y)$.

\item We can prove $\sqrt{a}\log \beta_a^* \ge -c \sqrt{a} \log \frac1{a}$, $0< a< 1/2$, for some constant $c>0$ following exactly the same lines of the proof of Theorem \ref{T:rate} (the case $\zeta=0$). Note that we do not need the iterative self-substitutions of \eqref{eq:ineq_rec}, see the last part of Subsection \ref{S:upper_bound}. 
\end{itemize}

\appendix 

\section{Proof of Lemma \ref{P:mean-value}} \label{app:mean} 

The following proof follows the lines of \cite{Kuz} but some parts are changed. Let $\mu := \frac1{\sqrt{a}}$. 
Note that Lemma \ref{L:means} reads 
\begin{equation}\label{eq:polar2}
\int_{B_r(y)}   H(x)\,\d x = N |B_1(0)| \int_0^r t^{N-1} \smean_t^y(H) \,\d t, \qquad 0< r <\eta. 
\end{equation}
We apply the Laplacian $\Delta_y$ to \eqref{eq:polar2}. Using the uniform distribution $\d \omega$ on the unit sphere $\partial B_1(0)$ and $\Delta H = \mu^2 H$, we have 
\[
\Delta_y \smean_r^y(H) = \Delta_y \int_{\partial B_1(0)} H(y + r\omega)\, \d\omega = \int_{\partial B_1(0)} \Delta H(y + r\omega)\, \d\omega = \mu^2 \smean_r^y(H),  
\]
so that 
\begin{equation}\label{eq:RHS}
\Delta_y(\text{the right hand side of \eqref{eq:polar2}}) 
=  N |B_1(0)|\mu^2 \int_0^r t^{N-1} \smean_t^y(H) \,\d t. 
\end{equation}
On the other hand, 
\begin{align} \label{eq:LHS}
&\Delta_y(\text{the left hand side of \eqref{eq:polar2}})  \\
&\qquad=  \Delta_y \int_{B_r(0)}   H(y+z)\,\d z 
= \int_{B_r(0)}   \Delta H(y+z)\,\d z \notag \\
&\qquad=  \int_{\partial B_r(0)}   \nabla H(y+z) \cdot \frac{z}{r}\,\d\sigma_z 
= |\partial B_r(0)|\int_{\partial B_1(0)}   \nabla H(y+r\omega) \cdot \omega\,\d\omega  \notag \\
&\qquad= N r^{N-1}|B_1(0)| \int_{\partial B_1(0)} \frac{\partial}{\partial r}  H(y+r\omega) \,\d\omega = N r^{N-1}|B_1(0)|\frac{\partial}{\partial r} \smean_r^y(H). \notag  
\end{align}
Equating \eqref{eq:RHS} and \eqref{eq:LHS} and differentiating them with respect to $r$ yield the ODE 
\begin{equation}\label{eq:ODE_smean}
    \partial_r^2 \smean_r^y(H)  + (N-1)r^{-1} \partial_r \smean_r^y(H) - \mu^2 \smean_r^y(H) =0. 
\end{equation}

Let $\nu:= \frac{N-2}{2}$ and $F(r) : = (\mu r)^\nu\smean_{r/\mu}^y(H)$, $0< r < \mu \eta$. Then \eqref{eq:ODE_smean} will turn into 
\begin{equation}\label{eq:Bessel}
  r^2  F''(r) + r F'(r) -(r^2 + \nu^2)F(r)=0, \qquad 0< r< \mu\eta.     
\end{equation}
This second order linear differential equation is known to have two linearly independent solutions $I_\nu(r)$ and $K_\nu(r)$ known as the modified Bessel functions of first and second kinds, respectively \cite[9.6.1]{AS}. Since a general solution to \eqref{eq:Bessel} is a linear combination of $I_\nu(r)$ and $K_\nu(r)$, there are constants $c_1, c_2 \in \R$ such that 
\[
F(r) = c_1 I_\nu(r) + c_2 K_\nu(r). 
\]
We can determine $c_1$ and $c_2$ from the asymptotics of the functions as $r\to0^+$. It is well known (see \cite[9.6.7 -- 9.6.9]{AS}) that 
\begin{align*}
    I_\nu(r) \sim \left(\frac{r}{2}\right)^\nu \frac1{\Gamma(1+\nu)}, \qquad r\to0^+, \\
    K_\nu(r)\to +\infty, \qquad r\to0^+.
\end{align*}
On the other hand, the definition of $F$ yields  
\[
F(r) =  (\mu r)^\nu(H(y)+o(1)), \qquad r\to0^+. 
\]
Combining these asymptotics we find that 
\[
c_1 = H(y) \Gamma(1+\nu) (2\mu)^\nu\qquad \text{and}\qquad c_2=0.  
\]
Taking the limit $r\uparrow \mu \eta$ in the relation $F(r) = c_1 I_\nu (r)$ yields 
\begin{equation}\label{eq:h}
H(y) = \frac{(\mu \eta/2)^\nu \smean_\eta^y(H)}{\Gamma(1+\nu) I_\nu(\mu \eta)}. 
\end{equation}

Finally, combined with the integral representation (see \cite[9.6.18]{AS}) 
\begin{align*}
I_\nu(r) 
&= \frac{(r/2)^\nu}{\Gamma(\frac1{2})\Gamma(\nu+\frac1{2})}\int_0^\pi e^{r \cos \theta} \sin^{2\nu}\theta \,\d\theta  \label{eq:Inu} \\
&=  \frac{(r/2)^\nu}{\Gamma(\frac1{2})\Gamma(\nu+\frac1{2})}\int_0^\pi \cosh(r \cos \theta) \sin^{2\nu}\theta \,\d\theta,  \qquad 0< r < +\infty \notag
\end{align*}
and the well known formula  
\[
\frac{\Gamma(\frac1{2})\Gamma(\nu+\frac1{2})}{\Gamma(1+\nu)} = B\left(\frac1{2},\nu+\frac1{2}\right) =  \int_0^{\pi} \sin^{N-2}\theta\,\d\theta, 
\]
Equation \eqref{eq:h}  amounts to the desired formula.

\section{Optimality of the rate of convergence for   Varadhan's equation}\label{app:rate}

We obtain an explicit rate of convergence for $-\sqrt{a} \log q_a(x)\to d(x,\partial \Omega)$ to confirm the optimality of the bound in \eqref{eq:rate_Varadhan} in  dimensions $N\ge2$.

We begin with the simplest case of one dimension and $\Omega=(-h,h)$. The solution \eqref{eq:Varadhan2} can be written as  
\[
q_a(x) = e^{\frac{x-h}{\sqrt{a}}}\cdot \frac{1+e^{-\frac{2x}{\sqrt{a}}}}{1+e^{-\frac{2h}{\sqrt{a}}}}, 
\]
so that for $0 \le x \le h$ we have 
\[
\sqrt{a} \log\frac{1}{2} \le -\sqrt{a} \log q_a(x) -(h-x) =\sqrt{a} \log\frac{1+e^{-\frac{2h}{\sqrt{a}}}}{1+e^{-\frac{2x}{\sqrt{a}}}} \le 0.   
\]
By symmetry we obtain a similar inequality for $x<0$ and amount to 
\[
\sup_{x \in \Omega}|-\sqrt{a} \log q_a(x) - d(x, \partial \Omega)| \le \sqrt{a} \log 2,  
\]
which improves \eqref{eq:rate_Varadhan}. 

In dimensions $N\ge2$, we consider the ball $\Omega := B_h(0) \subseteq \R^N$.  According to \eqref{eq:h}, the solution $q_a$ at 0 can be written as 
\begin{align*}
q_a(0) 
 = \frac{(\frac{h}{2\sqrt{a}})^{(N-2)/2}}{I_{(N-2)/2}(h/\sqrt{a})  \Gamma(\frac{N}{2})}.   
\end{align*}
Using the asymptotic bahavior $I_\nu(z) \sim \frac{e^z}{\sqrt{2\pi z}}$ as $z \to \infty~(z>0)$, see e.g.~\cite[formula 9.7.1]{AS}, we have 
\[
q_a(0) \sim  \frac{\sqrt{2\pi}}{2^{(N-2)/2}\Gamma(\frac{N}{2})} \left(\frac{h}{\sqrt{a}} \right)^{\frac{N-1}{2}}e^{-\frac{h}{\sqrt{a}}} 
\]  
and so 
\begin{equation*}\label{eq:N2}
- \sqrt{a} \log q_a(0) -h = -\frac{N-1}{4} \sqrt{a} \log \frac1{a} + \sqrt{a}\kappa_N + o(\sqrt{a}),  
\end{equation*}
where $\kappa_N:=\log\left( \Gamma (\frac{N}{2})\sqrt{\frac{2^{N-3}}{\pi h^{N-1}}} \right)$. This asymptotic formula is also valid for $N=1$ as can be checked directly.

\section*{Declarations}
\noindent
{\bf Conflict of interest.} 
On behalf of all authors, the corresponding author states that there is no conflict of interest.\\

\noindent
{\bf Data Availability.} 
This paper has no associated data.


\begin{thebibliography}{99}
        \bibitem{AS} M.~Abramowitz and I.~A.~Stegun, eds., Handbook of Mathematical Functions with Formulas, Graphs, and Mathematical Tables. Tenth printing. Applied Mathematics Series. Vol. 55, United States Department of Commerce, National Bureau of Standards, Washington D.C., 1972. 

         \bibitem{A02} G.~Allaire, 
        \newblock{\it Shape optimization by the homogenization method},
        \newblock{Vol.~146 of Applied Mathematical Sciences}, 
        \newblock{Springer-Verlag}, New York (2002).

           \bibitem{ABBG23} G.~Allaire, M.~Bihr, B.~Bogosel, M.~Godoy, 
        \newblock{Accessibility constraints in structural optimization via distance functions},
        \newblock{J.~Comp.~Phys} {\bf 484} (2023) 112083.

        
        \bibitem{AJT02} G.~Allaire, F.~Jouve, A.M.~Toader,  
        \newblock{A level-set method for shape optimization},
        \newblock{C.~R.~Acad.~Sci.~Paris} {\bf 334} (2002), 125--1130.

        \bibitem{AJT04} G.~Allaire, F.~Jouve, A.M.~Toader,  
        \newblock{Structural optimization using sensitivity analysis and a level-set method},
        \newblock{J.~Comp.~Phys.} {\bf 194} (2004), 363--393.

        %\bibitem{AJM16} G.~Allaire, F.~Jouve, G.~Michailidis, 
        %\newblock{\it Molding direction constraints in structural optimization via a level-set method. In:Variational Analysis and Aerospace Engineering}, Springer (2016) 1--39.

      

\bibitem{B} M.~Bardi,
\newblock{An asymptotic formula for the Green's function of an elliptic operator},
\newblock{Ann.~Scuola Norm.~Sup.~Pisa Cl.~Sci. (4)} {\bf 14} (1987), 569--586.


    \bibitem{BM19} D.~Berti, R.~Magnanini, 
        \newblock{Short-time behavior for game-theoretic $p$-caloric functions},
       J.~Math.~Pures Appl. {\bf 126} (2019), 249--272.

\bibitem{CD} L.-P. Chaintron and S. Daudin, 
\newblock{Optimal rate of convergence in the vanishing viscosity
for uniformly convex Hamilton-Jacobi equations},
\newblock{arXiv:2506.13255}.


\bibitem{CL83} M.G.~Crandall and P.-L.~Lions, \newblock{Viscosity solutions of Hamilton--Jacobi equations},
\newblock{Trans. Amer. Math. Soc.} {\bf 277} (1983), Vol. 277, No. 1, 1--42. 



\bibitem{EI} L.C.~Evans, H.~Ishii,
\newblock{A PDE approach to some asymptotic problems concerning random differential equations with small noise intensities},
\newblock{Ann. Inst. H. Poincar\'{e} Anal. Non Lin\'{e}aire} {\bf 2} (1985),  1--20.


\bibitem{FS86} W.H.~Fleming and P.E.~ Souganidis, \newblock{Asymptotic series and the method of vanishing viscosity},
\newblock{Indiana Univ. Math. J.}, {\bf 35} (1986), No. 2, 425--447. 


        \bibitem{GT} D.~Gilbarg, N.S.~Trudinger,
        \newblock{\it Elliptic Partial Differential Equations of Second order}, 
        \newblock{Classics in Mathematics. Springer-Verlag}, Berlin, 2001. Reprint of the 1998 edition.

\bibitem{Gri11} 
P.~Grisvard, \newblock{\it Elliptic Problems in 
Nonsmooth Domains}, \newblock{Classics in applied math., 69, 
Society for Industrial and Applied Mathematics, Philadelphia, PA, 2011}

          \bibitem{HMTY} T.~Hasebe, J.~Masamune, H.~Teramoto, T.~ Yamada, PDE methods for extracting normal vector fields and distance functions of shapes, \newblock{arXiv:2506.19323}

        \bibitem{Kuz} N.~Kuznetsov, Mean value properties of solutions to the Helmholtz and modified Helmholtz equations, Journal of Mathematical Sciences, {\bf 257} (2021), %No. 5, 
        673--683. 

        \bibitem{LP20} M.~Lewicka, Y.~Peres, 
\newblock{Which domains have two-sided supporting unit spheres at every boundary point?}, 
\newblock{Expo.~Math.~} {\bf38} (2020), no.~4, 548--558.

        \bibitem{L} J.~L.~Lions, 
        \newblock{\it Generalized Solutions of Hamilton--Jacobi Equations}, 
        \newblock{Chapman and Hall/CRC Research Notes in Mathematics,
        Pitman Publishing, 1982.}


       \bibitem{NYa} A.~Nakayasu and T.~Yamada 
       \newblock{Mathematical analysis of a partial differential equation system on the thickness}, 
       \newblock{arXiv:2409.19958}

       
       \bibitem{NYb} A.~Nakayasu and T.~Yamada 
       \newblock{On a calculation method of the thickness via partial differential equations}, 
       \newblock{arXiv:2511.12973}
	

       \bibitem{OSY25} T.~Oka, K.~Sakai, T.~Yamada, 
       \newblock{Topology optimization with geometric constraints via diffusion-based level set methods and distance functions}, 
       \newblock{JSIAM Lett.}  {\bf 17} (2025), 49--52.


        \bibitem{O88} S.~Osher, J.A.~Sethian, 
        \newblock{Fronts propagating with curvature-dependent speed: Algorithms based on Hamilton--Jacobi formulations}, 
        \newblock{J.~Comput.~Phys.} {\bf 79} (1988), 12--49.

\bibitem{P} B.~Perthame,
\newblock{Perturbed dynamical systems with an attracting singularity and weak viscosity limits in Hamilton--Jacobi equations},
\newblock{Trans.~Amer.~Math.~Soc.} {\bf 317} (1990), 723--748.

     

        \bibitem{SOS10} M.~H.~Shammaa, Y.~Ohtake, H.~Suzuki,
        \newblock{Segmentation of multi-material CT data of mechanical parts for extracting boundary surfaces},
        \newblock{\it Comput.~Aided~Des.}  {\bf 42} (2010), 118--128.

\bibitem{TY23} M.~Tajima, T.~Yamada,
        \newblock{Topology optimization with geometric constraints for additive manufacturing based on coupled fictitious physical model},
        \newblock{\it Comput.~Methods~Appl.~Mech.~Eng.}  {\bf 417} (2023), 116415.

\bibitem{Tra11} H.V.~Tran, \newblock{Adjoint methods for static Hamilton--Jacobi equations},
\newblock{Calc. Var.}, {\bf 41} (2011), 301--319. 

        \bibitem{V67} S.R.S.~Varadhan, 
        \newblock{On the behavior of the fundamental solution of the heat equation with variable coefficients},
        \newblock{Comm.~Pure Appl.~Math.} {\bf 20} (1967), 431--455. 

\end{thebibliography}
\end{document}